\documentclass[final,12pt]{colt2026} % Include author names
\DeclareMathOperator{\KL}{KL}
\newcommand{\KLinf}{\mathrm{KL}_{\inf}}

\newcommand{\Pbb}{\mathbb{P}}

\newcommand{\bbP}{\mathbf{P}}

\title[An Asymptotic Law of the Iterated Logarithm for $\KLinf$]{An Asymptotic Law of the Iterated Logarithm for $\KLinf$}
\usepackage{times}
\usepackage{amsmath,amssymb,mathtools}
\usepackage{comment}
\usepackage{hyperref}
\usepackage{graphicx}
\usepackage{subcaption}
\usepackage{booktabs}
\usepackage{enumitem}
\usepackage{mathrsfs}
\usepackage{comment}
\usepackage{siunitx}
\usepackage{xcolor}

 \coltauthor{\Name{Ashwin Ram} \Email{aram2@andrew.cmu.edu}\\
  \Name{Aaditya Ramdas} \Email{aramdas@cmu.edu}\\
\addr Carnegie Mellon University}

\begin{document}

\maketitle

\begin{abstract}%
The population $\KLinf$ is a fundamental quantity that appears in lower bounds for  (asymptotically) optimal regret of pure-exploration stochastic bandit algorithms, and optimal stopping time of sequential tests. Motivated by this, an empirical $\KLinf$ statistic is  frequently used in the design of (asymptotically) optimal bandit algorithms and sequential tests. While nonasymptotic concentration bounds for the empirical $\KLinf$ have been developed, their optimality in terms of constants and rates is questionable, and their generality is limited (usually to bounded observations). The fundamental limits of nonasymptotic concentration are often described by the asymptotic fluctuations of the statistics. With that motivation, this paper presents a tight (upper and lower) law of the iterated logarithm for empirical $\KLinf$ applying to extremely general (unbounded) data.
\end{abstract}

\begin{keywords}%
  Law of the Iterated Logarithm (LIL), Kullback-Leibler (KL) Divergence, Sequential Testing, Online Learning.
\end{keywords}

\newcommand{\what}{\widehat}
\section{Introduction}
Consider the typical setting of observing independent and identically distributed random variables $X_1, X_2,\dots$, with common distribution $\mathbf P\in\mathcal P([a,b])$ all with mean $\mu$ and variance $\sigma^2\in(0,\infty)$. Denote the empirical law of said random variables as $\what P_t=\frac1t\sum_{i=1}^t\delta_{X_i}$ with empirical mean $\what\mu_t$ and variance $\what\sigma_t^2$. Consider some distribution $\nu$ supported on a compact interval $[a, b]$ and a target mean $m\in[a, b]$. Given this, define the one-sided mean constrained information projection as,
\[
\KLinf(\nu, m):= \inf\Big\{\KL(\nu\|Q): Q\in\mathcal P([a,b]),\ \int x\,dQ(x)\ge m\Big\},
\]
and we take $\KL(\nu\|Q)=\infty$ if $\nu\not\ll Q$. In this paper, we characterize the almost-sure iterated logarithm scale of the empirical projection cost at the true mean, $\KLinf(\what P_t, \mu)$, where all probabilities and almost sure statements are under i.i.d.\ draws from $\mathbf P$. 

It is well known that the sample mean satisfies the Hartman-Wintner law of the iterated logarithm (LIL) \citep{khinchin1924, Kolmogoroff1929, hartman1941law, Feller1943, Chung1948, Strassen1964, Teicher1974}. That is, almost surely we have  $\limsup_{t\to\infty}\frac{\what\mu_t-\mu}{\sqrt{(2\sigma^2\log\log t)/t}}=1$ and $\liminf_{t\to\infty}\frac{\what\mu_t-\mu}{\sqrt{(2\sigma^2\log\log t)/t}}=-1$. Our main contribution, however, is that we show that $\KLinf(\what P_t, \mu)$ in fact has the same iterated logarithm constant (through expanding the $\KLinf$ quadratically). In other words, we show that $\KLinf(\what P_t,\mu)=(1+o(1))\frac{(\mu-\what\mu_t)_+^2}{2\what\sigma_t^2}$. As a consequence, with probability one, $\limsup_{t\to\infty}\frac{t\KLinf(\what P_t,\mu)}{\log\log t}=1$. We prove it through analyzing the affine tilt of $\what P_t$ to get a sharp upper bound and then a dual lower bound using the Donsker-Varadan inequality \citep{DonskerVaradhan1975, Csiszar2003} alongside uniform Taylor control. 

Importantly, we extend our sharp LIL equality to situations beyond the above introduced compact support by restricting the competing class to a slowly-growing deterministic envelope $[-B_t, B_t]$ and show that if a very weak sufficient condition is met (the data almost surely eventually satisfying $|X_i|\le B_t$ for all $i\le t$, for some sequence $B_t=o(\sqrt{t/\log\log t})$), then the same sharp-$1$ LIL constant holds for the time varying functional $\KLinf^{(t)}$ defined over $\mathcal P([-B_t, B_t])$. We show that such envelopes are immediate when analyzing typical tail ``regimes'' such as sub-Gaussian, sub-exponential, and finite $p$-th moment.

The fundamental mechanism behind KL-based index policies for stochastic multi-armed bandits are in fact the mean constrained KL projections 
% (denoted as $D_{\inf}$ or $K_{\inf}$) 
\citep{Robbins1952, Gittins1989, Lai1985, Burnetas1996, Auer2002, Bubeck2012, gariviercappe2011, cappe2013, hondatakemura2010, hondatakemura2015, kaufmann2012, Lattimore2020}. There has also been an enormous amount of work that uses large deviations and nonasymptotic concentration to derive logarithmic regret \citep{Cramer1938, Kullback1951, Sanov1957, Dembo1998, Boucheron2013, Howard2020, howard2021}. Instead, our work is the first to analyze (and characterize) the natural almost-sure fluctuation scale that the empirical projection has, $t\KLinf(\what P_t, \mu)$: we show it oscillates at order $\log\log t$ with a sharp constant. 

\paragraph{Related Work.}
Khinchin and Kolmogorov were among the first to analyze the LIL for sums of random variables and provide a finite-variance refinement of Hartman-Wintner \citep{khinchin1924, Kolmogoroff1929, hartman1941law}, and there certainly have been innumerable extensions and variants \citep{Feller1943, Chung1948, Strassen1964, Teicher1974, stout1970, stout1974, bingham1986, deacosta1983}. Especially important in sequential inference, there has been later work in generalizing LIL to finite-horizon or time-uniform settings through, for example, sharp finite-time martingale LIL bounds and confidence-sequences \citep{wald1947, darlingrobbins1967, balsubramani2015, Howard2020, howard2021}. Moreover, laws of the iterated logarithm have been proven to extend far beyond the sample mean itself, with laws of the logarithm and functional LILs for empirical and local empirical processes \citep{Shorack1986, vanderVaart1996, Deheuvels1994, Mason2004, Einmahl2004}. 

Fundamentally, the KL divergence \citep{Kullback1951} is the driving force, so to speak, of large deviations of empirical measures and entropy minimization \citep{Sanov1957, Dembo1998, Dupuis1997}. Csisz{\'a}r's $I$-divergence geometry and his refinements of them gave intuition for information projections under linear constraints \citep{Csiszar1975, Csiszar1984, Csiszar2003}. And closely related to all these entropy-tilting ideas is empirical likelihood and exponential tilting methods in statistics \citep{Owen1988, Owen2001}. Now, it is well understood in the bandit literature that optimality and bandit lower bounds are characterized by $\KL$ ``information'' defined by taking infima over so-called confusing alternatives \citep{Robbins1952, Gittins1989, Lai1985, Burnetas1996, kaufmann2012}. That is the motivation of a number of works that presented KL-based optimism indices and of course nonparametric variants to these \citep{Robbins1952, gariviercappe2011, cappe2013, hondatakemura2010, hondatakemura2015, Lattimore2020}. With all this said, we present a completely orthogonal contribution. These procedures frequently compute the empirical mean constrained projection cost as mentioned. To that end, we provide the first almost-sure LIL calibration of this $\KLinf$ cost.

The rest of this paper is organized as follows. In Section~\ref{sec:upper-law}, we provide an upper bound on the empirical $\KLinf$, which we then tighten (thereby showing equality) in Section~\ref{sec:LIL-KLinf-lower}. We present a theorem to extend these results to unbounded data in Section~\ref{sec:beyond-bounded}. In Appendix~\ref{sec:full-proofs}, we provide all the full proofs that were not provided in our main text. In Appendix~\ref{sec:further-applications-beyond-bounded} we give three concrete ``instantiations'' of our general theorem from Section~\ref{sec:beyond-bounded} holding. Finally, in Appendix~\ref{sec:env-tightness}, we prove the tightness of the sharp envelope introduced in Section~\ref{sec:beyond-bounded} that allows us to extend our results to unbounded data.
\newcommand{\Var}{\mathrm{Var}}
\newcommand{\Ebb}{\mathbb{E}}
\section{An Upper Law of Iterated Logarithm for the Empirical $\mathrm{KL}_{\inf}$}\label{sec:upper-law}
In this section, let us formalize the idea of $\KLinf$ satisfying an asymptotic LIL, in a general setting, outside of any specific application like sequential testing, etc. Recall our same setup as before such that $X_1, X_2, X_3, \dots$ are iid random variables with common distribution $\mathbf P\in\mathcal P([a,b])$ supported on the compact interval $[a,b]$, with mean $\mu:=\Ebb[X_1]$ and variance $\sigma^2:=\Var(X_1)\in(0,\infty)$. All probabilities $\bbP(\cdot)$ and expectations $\Ebb[\cdot]$ below are taken with respect to  $\mathbf P$.
% \footnote{Namely, the associated product measure on the sequence.} 
Given all this, let us denote the empirical measure for $t\ge 1$ as,
\[
\widehat P_t := \frac{1}{t}\sum_{i=1}^t \delta_{X_i},
\]
and let us denote the empirical mean and variance respectively as,
\[
\widehat\mu_t := \int x\,d\widehat P_t(x)=\frac{1}{t}\sum_{i=1}^t X_i, \qquad \widehat\sigma_t^2 := \int (x-\widehat\mu_t)^2\,d\widehat P_t(x)=\frac{1}{t}\sum_{i=1}^t (X_i-\widehat\mu_t)^2.
\]

\noindent Further, for any measurable $h$, we write $\Ebb_{\widehat P_t}[h(X)]:= \int h(x)d\widehat P_t(x) = \frac1t\sum_{i=1}^t h(X_i)$. Importantly, albeit $\widehat P_t$ indeed is random, conditioned on the data $(X_1, X_2, \dots)$ it is fixed. As a consequence, $\Ebb_{\widehat P_t}$ is simply an empirical average. Now, with those defined, let us explain how we denote the $\KLinf$ in this setting. Namely, for a probability measure $\nu$ taking support on $[a, b]$ and a scalar $m\in[a,b]$, let us define it as,
\[
\KL_{\inf}(\nu,m) := \inf\Big\{\KL(\nu\|Q): Q\in\mathcal P([a,b]),\ \int x\,dQ(x)\ge m\Big\},
\]
where $\KL(\nu\|Q)$ is just the typical KL divergence. And we adopt the convention that $\KL(\nu\|Q)=\infty$ if $\nu\not\ll Q$. In addition, one point we want to make is that the compact class that we analyze here does in fact matter. Why? Because if one allows $Q$ to place mass arbitrarily far on the upper tail,\footnote{As an example, on $\mathbb R$ with no restrictions.} then clearly $\KL_{\inf}(\nu, m)$ can collapse to $0$ whenever $m$ exceeds the mean of $\nu$. Meaning, one can easily ``sprinkle'' if you will an arbitrarily small amount of mass at some very large point to meet the mean constraint at a vanishing KL cost. As such, a bounded (or some otherwise constrained) model class is really what it takes to make the $\KLinf$ nontrivial. Hence that is precisely the motivation for our upper LIL theorem below.

\begin{theorem}\label{thm:LIL-KLinf}
Under our above setup we have that,
\[
\boxed{
\bbP\!\left( \limsup_{t\to\infty}\frac{t\,\KL_{\inf}(\widehat P_t,\mu)}{\log\log t}\le 1 \right)=1.
}
\]
Equivalently, for every $\varepsilon>0$, almost surely there exists a (possibly random) $T_\varepsilon<\infty$ such that for all $t\ge T_\varepsilon$,
\[
\KL_{\inf}(\widehat P_t,\mu)\le (1+\varepsilon)\frac{\log\log t}{t}.
\]
\end{theorem}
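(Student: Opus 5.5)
The plan is to exhibit, for each $t$, an explicit feasible competitor $Q_t$ whose KL cost is, to leading order, the quadratic $\frac{(\mu-\widehat\mu_t)_+^2}{2\widehat\sigma_t^2}$, and then to invoke the Hartman--Wintner LIL for $\widehat\mu_t$. On the event $\widehat\mu_t\ge\mu$ we can take $Q=\widehat P_t$, so $\KL_{\inf}(\widehat P_t,\mu)=0$. Henceforth we assume $\delta_t:=\mu-\widehat\mu_t>0$.

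The competitor is the affine tilt
\[
dQ_t=\bigl(1+\lambda_t(x-\widehat\mu_t)\bigr)\,d\widehat P_t(x),\qquad \lambda_t:=\delta_t/\widehat\sigma_t^2 .
\]
It has total mass one. Its mean is $\widehat\mu_t+\lambda_t\widehat\sigma_t^2=\mu$, so it is feasible. It is supported on the sample points, hence lies in $\mathcal P([a,b])$, and it has $\widehat P_t\ll Q_t$. It is a genuine probability measure provided $\lambda_t(b-a)<1$. By the strong law, $\widehat\sigma_t^2\to\sigma^2>0$ and $\delta_t\to0$ almost surely, so $\lambda_t\to0$ and this holds eventually. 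We then get
\[
\KL(\widehat P_t\|Q_t)=-\Ebb_{\widehat P_t}\bigl[\log(1+\lambda_t(X-\widehat\mu_t))\bigr].
\]
Use the elementary bound $-\log(1+u)\le -u+\frac{u^2}{2}\cdot\frac{1}{1-|u|}$, valid for $|u|<1$. The linear term has empirical mean zero. With $u_i=\lambda_t(X_i-\widehat\mu_t)$ and $|u_i|\le\lambda_t(b-a)=:\eta_t\to0$, this gives
\[
\KL_{\inf}(\widehat P_t,\mu)\le \frac{\lambda_t^2\widehat\sigma_t^2}{2(1-\eta_t)}=\frac{(\mu-\widehat\mu_t)_+^2}{2\widehat\sigma_t^2(1-\eta_t)}.
\]

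Now multiply by $t/\log\log t$. By the Hartman--Wintner LIL (recalled in the introduction), almost surely
\[
\limsup_t \frac{t(\mu-\widehat\mu_t)_+^2}{2\sigma^2\log\log t}=\Bigl(\liminf_t\frac{\widehat\mu_t-\mu}{\sqrt{2\sigma^2\log\log t/t}}\Bigr)_-^2=1 .
\]
Combined with $\widehat\sigma_t^2/\sigma^2\to1$ and $\eta_t\to0$, this yields $\limsup_t t\KL_{\inf}(\widehat P_t,\mu)/\log\log t\le1$ almost surely. The $\varepsilon$--$T_\varepsilon$ formulation is the standard restatement of a $\limsup$ bound.

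The only delicate point is making the Taylor remainder uniformly negligible, that is, ensuring $\eta_t\to0$. Here this is immediate from boundedness on $[a,b]$ and $\widehat\sigma_t^2\to\sigma^2>0$. Boundedness also keeps the tilt nonnegative. In the unbounded extension this step is exactly where the envelope condition $B_t\sqrt{\log\log t/t}\to0$ would be needed, since then $\eta_t\lesssim B_t|\delta_t|\to0$.
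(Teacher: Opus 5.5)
Your proposal is correct and follows the paper's proof essentially step for step: the same affine tilt $1+\lambda_t(x-\widehat\mu_t)$ with $\lambda_t=(\mu-\widehat\mu_t)_+/\widehat\sigma_t^2$, which lifts the mean exactly to $\mu$; a uniform second-order expansion of $-\log(1+u)$ in which the linear term has empirical mean zero; and then the Hartman--Wintner LIL. The only difference is cosmetic. You bound the remainder by $-\log(1+u)\le -u+\frac{u^2}{2(1-|u|)}$, via $\sum_{k\ge 2}|u|^k/k\le\frac12\sum_{k\ge2}|u|^k$, whereas the paper uses the Lagrange cubic remainder $\frac{|u|^3}{3(1-r)^3}$; both give a factor $1+O(\eta_t)$.
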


Before proceeding with our proof of this theorem, recall that in a sense the quantity $\KL_{\inf}(\widehat P_t,\mu)$ is the smallest relative-entropy budget needed to perturb the empirical law so that its mean can be at least the true mean $\mu$. If in fact the empirical mean $\widehat\mu_t \ge \mu$, then no perturbation of course would be necessary and $\KLinf(\widehat P_t,\mu)=0$. Now, in the case where $\widehat\mu_t <\mu$, one of the simplest ways to increase the mean is to slightly upweight observations above $\widehat\mu_t$ and downweight those that are below $\widehat\mu_t$. This must be done in a way that preserves the total mass. Such a perturbation is infinitesimal, so its KL cost is second-order: meaning, it is proportional to $\big(\mu-\widehat\mu_t\big)^2/(2\widehat\sigma_t^2)$. From there we can use the classic LIL for $\widehat\mu_t$ to convert immediately into an LIL for $\KLinf$. With all these being said, one more tool we will need for our theorem's proof will be the following lemma: a uniform Taylor bound for $-\log(1+u)$ on a shrinking neighborhood.

\begin{lemma}\label{lem:Taylor}
Take some $r\in(0,1)$ and  define $f(u):=-\log(1+u)$ on $(-1,\infty)$. Then, for every $u\in[-r, r]$, we have that \(f(u)\le -u + \frac{u^2}{2} + \frac{|u|^3}{3(1-r)^3}.\) 
\end{lemma}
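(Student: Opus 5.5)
The plan is to apply Taylor's theorem with the Lagrange form of the remainder to $f(u)=-\log(1+u)$ about $u=0$, and then bound the third derivative uniformly on $[-r,r]$. On $(-1,\infty)$ the function $f$ is smooth, with
\[
f(0)=0,\qquad f'(u)=-\frac{1}{1+u},\qquad f''(u)=\frac{1}{(1+u)^2},\qquad f'''(u)=-\frac{2}{(1+u)^3}.
\]
In particular $f'(0)=-1$ and $f''(0)=1$.

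Fix $u\in[-r,r]$. Since $r<1$, the segment between $0$ and $u$ lies in $[-r,r]\subset(-1,\infty)$. Taylor's theorem then gives some $\xi$ between $0$ and $u$ with
\[
f(u)=-u+\frac{u^2}{2}+\frac{f'''(\xi)}{6}u^3 .
\]
It remains to bound the remainder by $\frac{|u|^3}{6}\,|f'''(\xi)|$. Because $|\xi|\le |u|\le r$, we have $1+\xi\ge 1-r>0$. Hence $|f'''(\xi)|=2/(1+\xi)^3\le 2/(1-r)^3$. Therefore
\[
\frac{f'''(\xi)}{6}u^3\le \frac{|u|^3}{6}\cdot\frac{2}{(1-r)^3}=\frac{|u|^3}{3(1-r)^3},
\]
which is exactly the claimed inequality.

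There is no real obstacle; the only care needed is to note that $\xi$ stays at least $1-r$ away from the singularity at $-1$, so the derivative bound is uniform. One could even sharpen the bound for $u\ge0$, where the remainder is nonpositive, but the symmetric bound stated is what the subsequent proof of Theorem~\ref{thm:LIL-KLinf} will use with $r=r_t\to0$.
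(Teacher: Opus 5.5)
Your proof is correct and is essentially the paper's argument: a third-order Taylor expansion of $f$ about $0$ with Lagrange remainder, then the uniform bound $|f'''(\xi)|\le 2/(1-r)^3$, which follows from $1+\xi\ge 1-r$. Your side remark that the remainder is nonpositive for $u\ge 0$ is also correct, though it is not needed.
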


\noindent With all of this in mind, we are now finally ready for our proof of Theorem~\ref{thm:LIL-KLinf}.
\begin{proof}[Theorem~\ref{thm:LIL-KLinf}]
Let us start by setting $\Delta_t := (\mu - \widehat\mu_t)_+ = \max\{0,\mu-\widehat\mu_t\}$. Note that if $\Delta_t=0$, then necessarily $\widehat\mu_t\ge \mu$. This means that the empirical law $\widehat P_t$ must itself satisfy the mean constraint. Thus, $\KLinf(\widehat P_t, \mu)=0$. So, only the times with $\Delta_t>0$ matter and hence that is exactly what we're going to focus on. Let us work on the almost sure event on which the strong law of large numbers and the LIL for $\widehat\mu_t$ both hold. Namely, $\widehat\mu_t\to\mu$ almost surely and $\widehat\sigma_t^2\to\sigma^2$ almost surely. Note that also the ``classical'' LIL by \cite{hartman1941law} implies that for iid sequences with $\Var(X_1)=\sigma^2\in(0,\infty)$,
\[
\limsup_{t\to\infty}\frac{\widehat\mu_t-\mu}{\sqrt{(2\sigma^2\log\log t)/t}}=1, \qquad \liminf_{t\to\infty}\frac{\widehat\mu_t-\mu}{\sqrt{(2\sigma^2\log\log t)/t}}=-1,
\]
almost surely. Now, let us define a feasible affine tilt of the empirical law. That is, for those $t$ with $\Delta_t>0$ and $\widehat\sigma_t^2>0$, define $\theta_t :=\frac{\Delta_t}{\widehat \sigma_t^2}$ and $\frac{dQ_t}{d\widehat P_t}(x):=1+\theta_t(x-\widehat\mu_t)$. Crucially, this is the simplest possible perturbation that preserves the total mass because $\int (x-\widehat\mu_t)\,d\widehat P_t(x)=0$. In addition, this perturbation pushes the mean upward at a rate proportional to the variance. Furthermore, because of the fact that $x\in[a,b]$, we have that $|x-\widehat\mu_t|\le (b-a)$, and so,
\[
\inf_{x\in[a,b]}\Big(1+\theta_t(x-\widehat\mu_t)\Big) \ge 1-\theta_t(b-a).
\]
Since $\Delta_t\to 0$ and $\widehat\sigma_t^2\to\sigma^2>0$, we have that $\theta_t(b-a)\to 0$ almost surely. Thus, this means that for all sufficiently large $t$ the Radon-Nikodym derivative is strictly positive on $[a,b]$. As such, for such large $t$, $Q_t$ is therefore a well-defined probability measure supported on $[a,b]$, and,
\[
\int dQ_t = \int \big(1+\theta_t(x-\widehat\mu_t)\big)\,d\widehat P_t(x) = 1 + \theta_t\underbrace{\int(x-\widehat\mu_t)\,d\widehat P_t(x)}_{=0}=1.
\]

\noindent And, moreover, we know that its mean is exactly lifted to $\mu$. That is,
\begin{align*}
\int x\,dQ_t(x) &= \int x\big(1+\theta_t(x-\widehat\mu_t)\big)\,d\widehat P_t(x)\\
&= \underbrace{\int xd\widehat P_t(x)}_{\widehat\mu_t} + \theta_t \int x(x-\widehat\mu_t)\,d\widehat P_t(x) = \widehat\mu_t + \theta_t \int \underbrace{x(x-\widehat\mu_t)}_{(x-\widehat\mu_t)^2 + \widehat\mu_t(x-\widehat\mu_t)}\,d\widehat P_t(x)\\ 
&= \widehat\mu_t + \theta_t \int (x-\widehat\mu_t)^2\,d\widehat P_t(x) + \underbrace{\theta_t \int \widehat\mu_t(x-\widehat\mu_t)\,d\widehat P_t(x)}_{=0} \\
&= \widehat\mu_t + \theta_t \int (x-\widehat\mu_t)^2\,d\widehat P_t(x)\\
&= \widehat\mu_t + \theta_t\,\widehat\sigma_t^2 = \widehat\mu_t + \Delta_t = \mu.
\end{align*}

\noindent Hence, gorgeously, $Q_t$ satisfies the constraint $\int x\,dQ_t(x)\ge \mu$ exactly. Therefore, it follows that \(\KL_{\inf}(\widehat P_t,\mu)\le \KL(\widehat P_t\|Q_t)\). We now need to bound the KL cost of the actual tilt. By definition we know that,
\[
\KL(\widehat P_t\|Q_t) = \int \log\!\left(\frac{d\widehat P_t}{dQ_t}\right)\,d\widehat P_t = - \int \log\!\Big(1+\theta_t(x-\widehat\mu_t)\Big)\,d\widehat P_t(x).
\]

\noindent Now, let $X\sim\widehat P_t$ and define the random variable $U_t:=\theta_t(X-\widehat\mu_t)$. Then, necessarily because we treat $\widehat P_t$ as fixed conditioned on $(X_1, \dots, X_t)$, $\Ebb_{\widehat P_t}[U_t]=\theta_t\Ebb_{\widehat P_t}[X-\widehat\mu_t]=\theta_t\left(\int xd\widehat P_t(x)-\widehat\mu_t\right)=0$. Furthermore, we know that $|U_t|\le r_t$, where \(r_t:=\theta_t(b-a)=\frac{\Delta_t}{\widehat\sigma_t^2}(b-a)\xrightarrow[t\to\infty]{}0\) almost surely. So, to recap, we learned two things about our random variable $U_t$. First, it is centered, and second, uniformly small. Now, let us take $t$ large enough so that $r_t\in(0,1)$ and then quickly apply Lemma~\ref{lem:Taylor} with $r=r_t$ pointwise to $u=U_t$. If we do this, we get that,
\[
-\log(1+U_t) \le -U_t + \frac{U_t^2}{2} + \frac{|U_t|^3}{3(1-r_t)^3}.
\]

\noindent Then, taking $\widehat P_t$-expectations and leveraging the fact that $\Ebb_{\widehat P_t}[U_t]=0$, we get that,
\begin{equation}\label{eq:KL-bound}
\KL(\widehat P_t\|Q_t) \le \frac{\Ebb_{\widehat P_t}[U_t^2]}{2} + \frac{\Ebb_{\widehat P_t}[|U_t|^3]}{3(1-r_t)^3},
\end{equation}
as the first term $-\Ebb_{\widehat P_t}[U_t]=0$. Let us now compute these moments and then plug them back in. First, for the second moment we know that $\Ebb_{\widehat P_t}[U_t^2]=\theta_t^2\,\Ebb_{\widehat P_t}[(X-\widehat \mu_t)^2]=\theta_t^2\,\widehat\sigma_t^2$. Second, for the third absolute moment we know that $|U_t|\le r_t$. So, it follows that \(\Ebb_{\widehat P_t}[|U_t|^3]\le r_t\,\Ebb_{\widehat P_t}[U_t^2]= r_t\,\theta_t^2\,\widehat\sigma_t^2\). Plugging all these back into \eqref{eq:KL-bound} gives us,
\[
\KL(\widehat P_t\|Q_t) \le \frac{\theta_t^2\,\widehat\sigma_t^2}{2}\left(1+\frac{2r_t}{3(1-r_t)^3}\right) = \frac{\Delta_t^2}{2\widehat\sigma_t^2}\left(1+\frac{2r_t}{3(1-r_t)^3}\right).
\]

\noindent Now, we know that $r_t\to 0$ almost surely. As such, the $(1+\frac{2r_t}{3(1-r_t)^3})$ term is just essentially $(1+o(1))$ almost surely. Hence, we get that,
\begin{equation}\label{eq:KL-asymp}
\KL_{\inf}(\widehat P_t,\mu)\le \KL(\widehat P_t\|Q_t) \le (1+o(1))\,\frac{\Delta_t^2}{2\widehat\sigma_t^2}, \quad \text{almost surely as $t\to\infty$.}
\end{equation}

\noindent This really is very interesting here. We are seeing a local quadratic behavior of the KL. Namely, that the KL cost is second-order in the mean deficit. This aside, let us now inject the classical LIL for the mean: in other words, let us convert the mean LIL into a $\KLinf$ LIL to finish. The classic LIL for $\widehat\mu_t$ of course tells us that almost surely, \(\limsup_{t\to\infty}\frac{(\widehat\mu_t-\mu)^2}{(2\sigma^2\log\log t)/t}=1\). Furthermore, since $(\mu-\widehat\mu_t)_+\le |\widehat\mu_t-\mu|$, we know that the same upper bound holds for $\Delta_t$. In addition, the classic LIL also gives us \(\liminf(\widehat\mu_t-\mu)/\sqrt{(2\sigma^2\log\log t)/t}=-1\). This means that along a subsequence we have that $\widehat\mu_t-\mu<0$ with asymptotically maximal magnitude. Naturally, then this forces with probability one that $\limsup_{t\to\infty}\frac{\Delta_t^2}{(2\sigma^2\log\log t)/t}=1$. Obviously also, we know that with probability one that $\widehat\sigma_t^2\to\sigma^2$. Hence,
\[
\limsup_{t\to\infty}\frac{t}{\log\log t}\cdot\frac{\Delta_t^2}{2\widehat\sigma_t^2} = \limsup_{t\to\infty} \left(\frac{\sigma^2}{\widehat\sigma_t^2}\right) \left(\frac{\Delta_t^2}{(2\sigma^2\log\log t)/t}\right) =1,\quad\text{almost surely.}
\]

\noindent To finish, all we need to do is combine this result with \eqref{eq:KL-asymp} to get us with probability one that,
\[
\limsup_{t\to\infty}\frac{t\,\KL_{\inf}(\widehat P_t,\mu)}{\log\log t}\le 1,
\]
which is exactly our desired LIL bound. So, we are done.
\end{proof}

\section{A Matching Lower Law of Iterated Logarithm for the Empirical $\mathrm{KL}_{\inf}$}\label{sec:LIL-KLinf-lower}
In this section, we are now going to show that, in addition to the upper bound we proved, in fact there exists a matching lower bound, hence indeed the $\KLinf$ satisfies an asymptotic LIL with equality. To recap, Theorem~\ref{thm:LIL-KLinf} established the upper bound with probability one that,
\[
\limsup_{t\to\infty}\frac{t\,\KL_{\inf}(\widehat P_t,\mu)}{\log\log t}\le 1.
\]

\noindent We're now going to prove a matching lower bound. To this end, the $\limsup$ constant will become exactly $1$. To sort of prelude this entire section, we state the main theorem first. 

\begin{theorem}\label{thm:LIL-KLinf-equality}
Under the same exact setup of Theorem~\ref{thm:LIL-KLinf} we have in fact that,
\[
\boxed{
\bbP\left(\limsup_{t\to\infty}\frac{t\,\KL_{\inf}(\widehat P_t,\mu)}{\log\log t}=1\right)=1.
}
\]
\end{theorem}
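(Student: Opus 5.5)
Given Theorem~\ref{thm:LIL-KLinf}, it suffices to prove the matching lower bound: almost surely $\limsup_{t\to\infty} t\,\KL_{\inf}(\widehat P_t,\mu)/\log\log t\ge 1$. The plan is to show that, on the event where the strong law and the Hartman--Wintner LIL hold, $\KL_{\inf}(\widehat P_t,\mu)\ge(1-o(1))\,\Delta_t^2/(2\widehat\sigma_t^2)$ along the times with $\Delta_t>0$, where $\Delta_t=(\mu-\widehat\mu_t)_+$ as in the proof of Theorem~\ref{thm:LIL-KLinf}. In that proof we already showed that $\limsup_t t\Delta_t^2/(2\widehat\sigma_t^2\log\log t)=1$ almost surely, and that limit was attained along a subsequence where $\widehat\mu_t<\mu$. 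Combining the two displays gives the lower bound along that same subsequence.

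For the lower bound on the projection I would use the Donsker--Varadhan variational formula: for any bounded measurable $g$ and any $Q$, $\KL(\widehat P_t\|Q)\ge \Ebb_{\widehat P_t}[g]-\log\Ebb_Q[e^{g}]$. I would take the linear test function $g(x)=-\lambda(x-\widehat\mu_t)$ with $\lambda\ge0$, which penalizes upward mean shifts. Then $\Ebb_{\widehat P_t}[g]=0$, and it remains to upper bound $\Ebb_Q[e^{-\lambda(X-\widehat\mu_t)}]$ uniformly over all $Q\in\mathcal P([a,b])$ with mean at least $\mu$. The Donsker--Varadhan form is awkward for this, since it requires a bound on an exponential moment under an \emph{arbitrary} $Q$, where only the mean is controlled. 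I would therefore use the equivalent dual with the roles arranged conveniently, namely $\KL(\widehat P_t\|Q)\ge \Ebb_{\widehat P_t}[\log(1-\lambda(X-m))]$ for $m\ge\mu$, $\lambda\in[0,1/(b-m))$. This inequality follows from $\Ebb_{\widehat P_t}[\log(d\widehat P_t/dQ)]\ge \Ebb_{\widehat P_t}[h]-\log \Ebb_Q[e^h]$ with $h=\log(1-\lambda(X-m))$, because $\Ebb_Q[1-\lambda(X-m)]\le 1$ whenever $\int x\,dQ\ge m$. This is the Honda--Takemura-type dual, and it is exactly Donsker--Varadhan with a log-affine test function. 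Since the inequality holds for every feasible $Q$, I would take $m=\mu$, which gives $\KL_{\inf}(\widehat P_t,\mu)\ge \sup_{0\le\lambda<1/(b-\mu)}\Ebb_{\widehat P_t}[\log(1-\lambda(X-\mu))]$.

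Next I would choose $\lambda_t=\Delta_t/\widehat\sigma_t^2$ (positive, and tending to $0$ almost surely) and Taylor expand uniformly, since $|\lambda_t(X-\mu)|\le\lambda_t(b-a)=:r_t\to0$. The bound $\log(1-v)\ge -v-\tfrac{v^2}{2}-\tfrac{|v|^3}{3(1-r)^3}$ for $|v|\le r$ is the analogue of Lemma~\ref{lem:Taylor}. It gives
\[
\Ebb_{\widehat P_t}[\log(1-\lambda_t(X-\mu))]\ge \lambda_t\Delta_t-\tfrac{\lambda_t^2}{2}\Ebb_{\widehat P_t}[(X-\mu)^2](1+O(r_t)),
\]
using $\Ebb_{\widehat P_t}[\mu-X]=\Delta_t$. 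Here $\Ebb_{\widehat P_t}[(X-\mu)^2]=\widehat\sigma_t^2+\Delta_t^2$. Substituting $\lambda_t$ yields $\frac{\Delta_t^2}{2\widehat\sigma_t^2}(1-O(r_t)-O(\Delta_t^2/\widehat\sigma_t^2))=(1-o(1))\frac{\Delta_t^2}{2\widehat\sigma_t^2}$ almost surely.

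The main obstacle I anticipate is justifying the dual inequality cleanly. This needs care with absolute continuity: if $\widehat P_t\not\ll Q$ the KL is infinite and the bound is trivial. It also needs the positivity condition $1-\lambda(x-\mu)>0$ on $[a,b]$, which holds for large $t$ because $\lambda_t\to0$, and the requirement that $\lambda_t(b-a)<1$ eventually. Everything else is the same almost-sure bookkeeping as in Theorem~\ref{thm:LIL-KLinf}: along the LIL subsequence $\widehat\mu_t<\mu$ with $\Delta_t^2\sim 2\sigma^2\log\log t/t$, so the lower bound has $\limsup\ge1$, and combining it with Theorem~\ref{thm:LIL-KLinf} gives equality.
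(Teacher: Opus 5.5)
Your proposal is correct and follows essentially the same route as the paper. Both use the Donsker--Varadhan bound with the log-affine test function $\log(1+\lambda(\mu-x))$, so that $\int e^{\varphi}\,dQ\le 1$ for every feasible $Q$, and evaluate it at $\lambda_t=\Delta_t/\widehat\sigma_t^2$. Both then apply the uniform Taylor bound of Lemma~\ref{lem:Taylor} (your ``analogue'' is exactly that lemma with $u=-v$) and the identity $\Ebb_{\widehat P_t}[(X-\mu)^2]=\widehat\sigma_t^2+\Delta_t^2$, and finish along the LIL subsequence where $\widehat\mu_t<\mu$. The only cosmetic difference is that you absorb the cubic remainder via $|v|^3\le r_t v^2$ instead of the paper's cruder $\lambda_t^3(b-a)^3$; the paper itself uses your version in the proof of the envelope theorem.
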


\noindent To prove this, the only additional item we need is the Donsker-Varadhan variational inequality for $\KL(\nu\|Q)$ \citep{DonskerVaradhan1975}, which we state below and provide a self-contained proof  in the appendix for completeness.  

\begin{lemma}\label{lem:DV-lower}
Let $(\Omega,\mathcal F)$ be a measurable space and let $\nu,Q$ be probability measures on the space. Now suppose that $\nu\ll Q$. Then, for every bounded and measurable $\varphi:\Omega\to\mathbb R$ one has,
\[
\KL(\nu\|Q) \ge \int \varphi\,d\nu - \log\left(\int e^{\varphi}dQ\right).
\]
If $\nu\not\ll Q$, then $\KL(\nu\|Q)=\infty$ by convention and hence the inequality is trivially true. 
\end{lemma}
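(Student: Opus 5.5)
The plan is to prove Lemma~\ref{lem:DV-lower} by a change of measure to the Gibbs (exponentially tilted) measure associated with $\varphi$. Then the gap in the inequality is itself a KL divergence, and that divergence is nonnegative. First I dispose of trivial cases. If $\nu\not\ll Q$ the claim holds by convention. If $\nu\ll Q$ but $\KL(\nu\|Q)=+\infty$, the right-hand side is finite, so the inequality again holds. This is because $\varphi$ is bounded, say $|\varphi|\le M$, which gives $\big|\int\varphi\,d\nu\big|\le M$ and $e^{-M}\le Z:=\int e^{\varphi}dQ\le e^{M}$. So it suffices to treat the case $\nu\ll Q$ with $\KL(\nu\|Q)<\infty$.

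Next I define the tilted probability measure $Q_\varphi$ by $\frac{dQ_\varphi}{dQ}=e^{\varphi}/Z$. This density is bounded above and below by positive constants, so $Q_\varphi$ and $Q$ are mutually absolutely continuous. In particular $\nu\ll Q_\varphi$, and the chain rule for Radon--Nikodym derivatives gives, $\nu$-almost surely,
\[
\log\frac{d\nu}{dQ_\varphi}=\log\frac{d\nu}{dQ}-\varphi+\log Z .
\]
The right-hand side is $\nu$-integrable: the first term is integrable because $\KL(\nu\|Q)<\infty$, and the other two are bounded. Integrating against $\nu$ yields the identity
\[
\KL(\nu\|Q_\varphi)=\KL(\nu\|Q)-\Big(\int\varphi\,d\nu-\log\int e^{\varphi}dQ\Big).
\]

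It remains to show $\KL(\nu\|Q_\varphi)\ge 0$, which is Gibbs' inequality. I would prove it via Jensen's inequality applied to the concave function $\log$. Writing $g=d\nu/dQ_\varphi$, we have
\[
-\KL(\nu\|Q_\varphi)=\int_{\{g>0\}}\log(1/g)\,d\nu\le\log\int_{\{g>0\}}g^{-1}\,d\nu=\log Q_\varphi(g>0)\le 0 .
\]
This requires $\log(1/g)$ to be $\nu$-integrable, which was established in the previous step. Rearranging the identity then gives the lemma.

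The only real subtlety, and the step I would be most careful with, is the integrability bookkeeping. The quantity $\int\log\frac{d\nu}{dQ}\,d\nu$ must be well defined, and splitting the integral must not create an $\infty-\infty$ expression. Boundedness of $\varphi$ together with the preliminary reduction to the case $\KL(\nu\|Q)<\infty$ handles this. Apart from that point, the argument is a one-line change of measure.
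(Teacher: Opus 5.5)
Your proposal is correct and follows essentially the same route as the paper: tilt $Q$ by the density $e^{\varphi}/Z$, use the Radon--Nikodym chain rule to get $\KL(\nu\|Q)=\KL(\nu\|Q^{\varphi})+\int\varphi\,d\nu-\log Z$, and conclude from $\KL(\nu\|Q^{\varphi})\ge 0$. Two details differ slightly from the paper. You first reduce to the case $\KL(\nu\|Q)<\infty$ and prove Gibbs' inequality via Jensen, whereas the paper instead dispatches the (vacuous, since $\varphi$ is bounded) case $\int e^{\varphi}\,dQ=\infty$. This is a bit more careful about integrability but does not change the argument.
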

\begin{comment}
\begin{lemma}\label{lem:Taylor-lower-log}
Consider a particular $r\in(0,1)$ and define the function $g(u):=\log(1+u)$ on $(-1,\infty)$. Then for every $u\in[-r, r]$ we have that,
\[
g(u) \ge u-\frac{u^2}{2} - \frac{|u|^3}{3(1-r)^3}.
\]
\end{lemma}
\end{comment}

\noindent We are now ready to prove  Theorem~\ref{thm:LIL-KLinf-equality}.
\begin{proof}[Theorem~\ref{thm:LIL-KLinf-equality}]
As we mentioned before, we already have an almost-sure upper bound from the first Theorem~\ref{thm:LIL-KLinf}. So it remains to prove the matching lower bound with probability one that,
\[
\limsup_{t\to\infty}\frac{t\KLinf(\widehat P_t, \mu)}{\log\log t}\ge 1.
\]
To begin, as in our proof of Theorem~\ref{thm:LIL-KLinf}, define $\Delta_t:=(\mu-\widehat\mu_t)_+=\max\{0,\mu-\widehat\mu_t\}$. We know that whenever $\Delta_t=0$, we have that $\widehat\mu_t\ge\mu$, which means that $\KLinf(\widehat P_t,\mu)=0$ by feasibility of $\widehat P_t$ alone. The takeaway is therefore that the lower bound for the $\limsup$ can only come from those times where $\Delta_t>0$. Let's work on the probability one event where both the strong law and classic LIL hold. Meaning, we will work where $\widehat\mu_t\to\mu$ and $\widehat\sigma_t^2\to\sigma^2\in(0,\infty)$. And, we will work where, $\limsup_{t\to\infty}\frac{\widehat\mu_t-\mu}{\sqrt{(2\sigma^2\log\log t)/t}}=1$ and $\liminf_{t\to\infty}\frac{\widehat\mu_t-\mu}{\sqrt{(2\sigma^2\log\log t)/t}}=-1$. Importantly, the second relation here tells us that along some subsequence $\{t_k\}$, we have that $\widehat\mu_{t_k}<\mu$ with asymptotically optimal magnitude. That is, we have with probability one that,
\begin{equation}\label{eq:Delta-LIL}
\limsup_{t\to\infty}\frac{\Delta_t^2}{(2\sigma^2\log\log t)/t}=1.
\end{equation}

\noindent Indeed if we let $a_t:=\sqrt{(2\sigma^2\log\log t)/t}$, the LIL gives us that $\limsup (\widehat\mu_t-\mu)/a_t=1$ and $\liminf(\widehat\mu_t -\mu)/a_t=-1$. By the second identity we explained above, we know that there is a subsequence $t_k$ with $(\widehat\mu_{t_k}-\mu)/a_{t_k}\to -1$. As a result, for all large $k$ we have that $\widehat\mu_{t_k}<\mu$ and therefore $\frac{\Delta_{t_k}}{a_{t_k}}=\frac{\mu-\widehat\mu_{t_k}}{a_{t_k}}=-\frac{\widehat\mu_{t_k}-\mu}{a_{t_k}}\to 1$. On the other hand, we know that for all $t$, $\Delta_t\le |\widehat\mu_t-\mu|$ and $\limsup |\widehat\mu_t-\mu|/a_t=1$. So, $\limsup \Delta_t/a_t\le 1$. If we now combine both we get that $\limsup \Delta_t/a_t=1$, which is exactly \eqref{eq:Delta-LIL}. Let us now derive a deterministic lower bound on $\KLinf(\widehat P_t,\mu)$ in terms of $\Delta_t$ and $\widehat\sigma_t^2$. To begin, consider a particular $t\ge 1$ and an arbitrary measure $Q\in\mathcal P([a,b])$ such that $\int xdQ(x)\ge \mu$. Let $\lambda\ge 0$ be such that $\lambda(b-\mu)<1$ so that for every $x\in[a,b]$ we have that $1+\lambda(\mu-x)\ge 1-\lambda(b-\mu)>0$. As a result of this, we know that $\varphi_\lambda(x):=\log(1+\lambda(\mu-x))$ will be well-defined indeed and bounded on $[a,b]$. Let's now apply Lemma~\ref{lem:DV-lower} with $\nu=\widehat P_t$, this particular $Q$, and $\varphi=\varphi_\lambda$. Doing this gets us
\begin{align*}
\KL(\widehat P_t\|Q) &\ge \int \log(1+\lambda(\mu-x))\,d\widehat P_t(x) -\log\left(\int (1+\lambda(\mu-x))\,dQ(x)\right).
\end{align*}
We know that the second term is controlled by the mean constraint. Hence,
\[
\int(1+\lambda(\mu-x))\,dQ(x)=1+\lambda\Big(\mu-\int x\,dQ(x)\Big)\le 1.
\]

\noindent Obviously, the function $\log(\cdot)$ is increasing, which implies that,
\[
-\log\left(\int(1+\lambda(\mu-x))\,dQ(x)\right)\ge-\log(1)=0.
\]
So! It follows that for every such $Q$ and every $\lambda\in$ $[0,1/(b-\mu))$ we have that,
\[
\KL(\widehat P_t\|Q)\ \ge\ \int \log(1+\lambda(\mu-x))\,d\widehat P_t(x).
\]

\noindent Then, taking the infimum very quickly over all feasible $Q$ gives us the lower bound,
\begin{equation}\label{eq:dual-lower-bound}
\KL_{\inf}(\widehat P_t,\mu) \ge \sup_{0\le \lambda<1/(b-\mu)} \int \log(1+\lambda(\mu-x))\,d\widehat P_t(x).
\end{equation}

\noindent The key beautiful point here is that we now have a lower bound on $\KLinf$ only in terms of the empirical measure. With this said, we will now evaluate the right-hand side here at a very carefully chosen $\lambda$ which matches the local quadratic scale we've been seeing for the KL. Namely define,
\[
\lambda_t := 
\begin{cases}
\frac{\Delta_t}{\widehat\sigma_t^2}, & \text{if }\Delta_t>0\text{ and }\widehat\sigma_t^2>0,\\
0, & \text{otherwise}.
\end{cases}
\]
Now, on this probability one event, we know that $\widehat\sigma_t^2\to\sigma^2>0$ and $\Delta_t\to 0$. As a consequence, $\lambda_t\to 0$. Meaning, for all $t$ sufficiently large we have that $\lambda_t(b-\mu)<1$ and thus $\lambda_t$ is admissible in \eqref{eq:dual-lower-bound}. Hence indeed for all such large $t$ we have that,
\begin{equation}\label{eq:eval-at-lambda-t}
\KLinf(\widehat P_t,\mu) \ge \int \log(1+\lambda_t(\mu-x))\,d\widehat P_t(x).
\end{equation}

\noindent Now, let $X\sim \widehat P_t$ and set $U_t:=\lambda_t(\mu-X)$. Here, again, as per our convention $\Ebb_{\widehat P_t}$ is integration with respect to the empirical measure. We know that $X\in[a,b]$ necessarily so we must have $|\mu-X|\le b-a$. Hence it follows that with probability one, $|U_t|\le r_t$ where $r_t:=\lambda_t(b-a)=\frac{\Delta_t}{\widehat\sigma_t^2}(b-a)\xrightarrow[t\to\infty]{}0$. Now, since $r_t\to 0$ almost surely, we know that there exists with probability one a finite random time $T_r$ such that for all $t\ge T_r$ we have that $r_t<1$. And when $r_t=0$ we have that $\lambda_t=0$ so that would mean $U_t\equiv0$ which means that the inequality is trivial. So, when we are applying Lemma~\ref{lem:Taylor}, we certainly may restrict our attention to the case where $r_t\in(0,1)$ strictly. To this end, applying Lemma~\ref{lem:Taylor} with $r=r_t$ pointwise to $u=U_t$ gives us that $\log(1+U_t)\ge U_t-\frac{U_t^2}{2}-\frac{|U_t|^3}{3(1-r_t)^3}$. Let's now take expectations with respect to $\widehat P_t$. Doing this gets us,
\begin{equation}\label{eq:log-expansion-lower}
\int \log(1+\lambda_t(\mu-x))\,d\widehat P_t(x) \ge \Ebb_{\widehat P_t}[U_t] - \frac{1}{2}\Ebb_{\widehat P_t}[U_t^2] - \frac{1}{3(1-r_t)^3}\Ebb_{\widehat P_t}[|U_t|^3].
\end{equation}

\noindent We're now going to compute each one of these terms explicitly in terms of both $\Delta_t$ and $\widehat\sigma_t^2$. For the first term we know that,
\[
\Ebb_{\widehat P_t}[U_t]=\lambda_t\,\Ebb_{\widehat P_t}[\mu-X]=\lambda_t(\mu-\widehat\mu_t)=\lambda_t\,\Delta_t.
\]

\noindent Now, let's do the second term. We know that $\Ebb_{\widehat P_t}[U_t^2]=\lambda_t^2\Ebb_{\widehat P_t}[(\mu-X)^2]$. Now, of course, $\widehat\mu_t=\Ebb_{\widehat P_t}[X]$ and thus we can easily expand $(\mu-X)^2$ around $\widehat\mu_t$. How? We can use the basic fact that $\mu-X=(\mu-\widehat\mu_t)+(\widehat\mu_t-X)$ which holds for all $t$. Let's now square and take expectations with respect to $\widehat P_t$. That is,
\begin{align*}
\Ebb_{\widehat P_t}[(\mu-X)^2]
&=\Ebb_{\widehat P_t}\big[(\mu-\widehat\mu_t)^2+2(\mu-\widehat\mu_t)(\widehat\mu_t-X) +(\widehat\mu_t-X)^2\big]\\
&=(\mu-\widehat\mu_t)^2 +2(\mu-\widehat\mu_t)\underbrace{\Ebb_{\widehat P_t}[\widehat\mu_t-X]}_{=0} +\underbrace{\Ebb_{\widehat P_t}[(\widehat\mu_t-X)^2]}_{=\widehat\sigma_t^2}\\
&=\widehat\sigma_t^2+(\mu-\widehat\mu_t)^2.
\end{align*}

\noindent Therefore, we get that $\Ebb_{\widehat P_t}[U_t^2]=\lambda_t^2\big(\widehat\sigma_t^2+(\mu-\widehat\mu_t)^2\big)$. Lastly, by definition we know that $\lambda_t=\Delta_t/\widehat\sigma_t^2$. And, we know that $\lambda_t=0$ whenever $\Delta_t=0$. As such, we have that $\lambda_t^2(\mu-\widehat\mu_t)^2=\lambda_t^2\Delta_t^2$ for all $t$. Hence we get that,
\[
\Ebb_{\widehat P_t}[U_t^2]=\lambda_t^2(\widehat\sigma_t^2+\Delta_t^2).
\]

\noindent So, that concludes the second term. It remains to compute the third final moment. Let's do it. We can simply use the fact that $|\mu-X|\le b-a$. This gives us that
\[
\Ebb_{\widehat P_t}[|U_t|^3]=\lambda_t^3\,\Ebb_{\widehat P_t}[|\mu-X|^3]\le \lambda_t^3(b-a)^3.
\]
Now we've computed all the terms. Hence let us substitute them back into \eqref{eq:log-expansion-lower}. Doing this gives us
\begin{align*}
\int \log(1+\lambda_t(\mu-x))\,d\widehat P_t(x) &\ge \lambda_t\Delta_t-\frac{1}{2}\lambda_t^2(\widehat\sigma_t^2+\Delta_t^2)-\frac{1}{3(1-r_t)^3}\lambda_t^3(b-a)^3.
\end{align*}

\noindent Let's place our attention on the relevant large $t$ regime where $\Delta_t>0$ and $\widehat\sigma_t^2>0$. In doing so, let us plug in $\lambda_t=\Delta_t/\widehat\sigma_t^2$. To this end for the first term we get that $\lambda_t\Delta_t = \frac{\Delta_t^2}{\widehat\sigma_t^2}$. The quadratic term becomes just $\frac{1}{2}\lambda_t^2(\widehat\sigma_t^2+\Delta_t^2)=\frac{1}{2}\frac{\Delta_t^2}{\widehat\sigma_t^4}(\widehat\sigma_t^2+\Delta_t^2)=\frac{\Delta_t^2}{2\widehat\sigma_t^2}+\frac{\Delta_t^4}{2\widehat\sigma_t^4}$. So overall, we know that the first two contributions are going to simplify as
\[
\lambda_t\Delta_t-\frac{1}{2}\lambda_t^2(\widehat\sigma_t^2+\Delta_t^2)=\frac{\Delta_t^2}{2\widehat\sigma_t^2}-\frac{\Delta_t^4}{2\widehat\sigma_t^4}.
\]

\noindent Lastly, we know that the cubic remainder term is bounded by $\frac{1}{3(1-r_t)^3}\lambda_t^3(b-a)^3=\frac{(b-a)^3}{3(1-r_t)^3}\cdot \frac{\Delta_t^3}{\widehat\sigma_t^6}$. Let's take all that we have shown now. Leveraging it all, we get the lower bound
\begin{equation}\label{eq:KLinf-lower-quadratic}
\int \log(1+\lambda_t(\mu-x))\,d\widehat P_t(x)\ge \frac{\Delta_t^2}{2\widehat\sigma_t^2} -\frac{\Delta_t^4}{2\widehat\sigma_t^4}-\frac{(b-a)^3}{3(1-r_t)^3}\cdot \frac{\Delta_t^3}{\widehat\sigma_t^6}.
\end{equation}

\noindent We're finally ready to go back to \eqref{eq:eval-at-lambda-t}. Indeed, we therefore are going to have for all $t$ large enough that
\[
\KL_{\inf}(\widehat P_t,\mu) \ge \frac{\Delta_t^2}{2\widehat\sigma_t^2}- \frac{\Delta_t^4}{2\widehat\sigma_t^4}- \frac{(b-a)^3}{3(1-r_t)^3}\cdot \frac{\Delta_t^3}{\widehat\sigma_t^6}.
\]

\noindent Now, $\Delta_t\to 0$. And, $\widehat\sigma_t^2\to\sigma^2>0$. Also, $r_t\to 0$ almost surely. As a consequence of all this, we know that both of the error terms are $o\big(\Delta_t^2/\widehat\sigma_t^2\big)$ almost surely. To be concrete we have that with probability one $\frac{\Delta_t^4/\widehat\sigma_t^4}{\Delta_t^2/\widehat\sigma_t^2}=\frac{\Delta_t^2}{\widehat\sigma_t^2}\xrightarrow[t\to\infty]{}0$, $\frac{\Delta_t^3/\widehat\sigma_t^6}{\Delta_t^2/\widehat\sigma_t^2}=\frac{\Delta_t}{\widehat\sigma_t^4}\xrightarrow[t\to\infty]{}0$ and that $(1-r_t)^{-3}\to 1$. As a consequence we get this beautiful local quadratic lower bound almost surely as $t\to\infty$,
\begin{equation}\label{eq:KL-inf-asymp}
\KLinf(\widehat P_t,\mu) \ge (1-o(1))\,\frac{\Delta_t^2}{2\widehat\sigma_t^2}.
\end{equation}

\noindent Now, we are going to combine \eqref{eq:KL-inf-asymp} with the mean LIL information that we got, \eqref{eq:Delta-LIL}. Of course, since $\widehat\sigma_t^2\to\sigma^2$, we get that,
\[
\limsup_{t\to\infty}\frac{t}{\log\log t}\cdot \frac{\Delta_t^2}{2\widehat\sigma_t^2} =\limsup_{t\to\infty}\left(\frac{\sigma^2}{\widehat\sigma_t^2}\right)\left(\frac{\Delta_t^2}{(2\sigma^2\log\log t)/t}\right)=1,
\]
almost surely. Now, let us take a particular but arbitrary $\varepsilon\in(0,1)$. Certainly we know by $\eqref{eq:KL-inf-asymp}$ that on our almost sure event there exists a random $T_\varepsilon<\infty$ such that for all $t\ge T_\varepsilon$,
\[
\KLinf(\widehat P_t,\mu)\ge (1-\varepsilon)\frac{\Delta_t^2}{2\widehat\sigma_t^2}.
\]

\noindent Let's now multiply by $t/\log\log t$ and then take the $\limsup$ over $t\to\infty$. Doing so gives us that,
\[
\limsup_{t\to\infty}\frac{t\,\KL_{\inf}(\widehat P_t,\mu)}{\log\log t}\ge (1-\varepsilon)\limsup_{t\to\infty}\frac{t}{\log\log t}\cdot\frac{\Delta_t^2}{2\widehat\sigma_t^2} =(1-\varepsilon)\cdot 1.
\]

\noindent Note that $\varepsilon$ was of course arbitrary. That means that almost surely,
\[
\limsup_{t\to\infty}\frac{t\,\KLinf(\widehat P_t,\mu)}{\log\log t}\ge 1.
\]
And taken together with our Theorem~\ref{thm:LIL-KLinf}'s already-proven upper bound, we indeed have equality. That is,
\[
\limsup_{t\to\infty}\frac{t\,\KLinf(\widehat P_t,\mu)}{\log\log t}=1.
\]
And, with that, we are done at last.
\end{proof}

\section{Beyond Bounded Case: An Extension to Sub-Gaussian and other such tail regimes}\label{sec:beyond-bounded}
%\textcolor{red}{Ashwin: to be done.}
The bounded support condition that we imposed is in fact quite important, as we already detailed. The reason is simple: we are able to keep the $\KLinf$ controlled, particularly if mass is placed at arbitrary far out points on the right tail. Therefore, in this section, we will start by showing this degeneracy of the $\KLinf$ formally. We will then provide a general theorem to recover the very asymptotic tail patterns in multiple regimes (eg, subgaussian tails). Intuitively, what we do is enforce an natural (and of course, apt) constraint on the competing model class. Having said all of that, the very first thing we are going to show rigorously this idea of the $\KLinf$ collapsing on $\mathbb R$ when it is not constrained. Let $\nu$ be any probability measure on $\mathbb R$ with finite mean $\bar \mu:= \int x\,d\nu(x)$. Let us consider the unconstrained definition,
\[
\KL_{\inf}^{\mathbb R}(\nu,m) :=\inf\Big\{\KL(\nu\|Q): Q\in\mathcal P(\mathbb R),\ \int x\,dQ(x)\ge m\Big\}.
\]
We show degeneracy of it without a tail constraint in the next proposition.
\begin{proposition}\label{prop:KLinf-degenerate}
Suppose $m>\bar\mu$. Then, $\KLinf^{\mathbb R}(\nu,m)=0$.
\end{proposition}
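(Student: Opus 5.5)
The plan is to exhibit an explicit family of feasible competitors $Q_\varepsilon$ whose KL cost to $\nu$ tends to $0$. Since $\KL \ge 0$, this forces $\KLinf^{\mathbb R}(\nu,m)=0$. The construction is the ``sprinkling'' idea described informally before Theorem~\ref{thm:LIL-KLinf}: take a small amount of mass from $\nu$ and place it at a single far-out point.

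Concretely, for $\varepsilon\in(0,1)$ and a point $y_\varepsilon\in\mathbb R$ to be chosen, set
\[
Q_\varepsilon := (1-\varepsilon)\nu + \varepsilon\,\delta_{y_\varepsilon}.
\]
This is a probability measure on $\mathbb R$. Its mean is $(1-\varepsilon)\bar\mu + \varepsilon y_\varepsilon$, which is finite because $\bar\mu$ is. Choosing
\[
y_\varepsilon := \bar\mu + \frac{m-\bar\mu}{\varepsilon}
\]
gives $\int x\,dQ_\varepsilon(x) = \bar\mu + (m-\bar\mu) = m$, so $Q_\varepsilon$ is feasible. Here $m>\bar\mu$ is used only to guarantee that this is an upward shift, although any $y$ large enough would work.

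Next I will check absolute continuity and compute the KL divergence.

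If $\nu(\{y_\varepsilon\})=0$, then $\nu \ll Q_\varepsilon$ with density $d\nu/dQ_\varepsilon = 1/(1-\varepsilon)$ $\nu$-a.e. To see this, decompose along $\{y_\varepsilon\}$ and its complement. On the complement, $Q_\varepsilon = (1-\varepsilon)\nu$, and the atom carries no $\nu$-mass. Hence
\[
\KL(\nu\|Q_\varepsilon) = \int \log\frac{1}{1-\varepsilon}\,d\nu = -\log(1-\varepsilon).
\]

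In general $\nu$ may have an atom at $y_\varepsilon$. The cleanest fix is to observe directly that $Q_\varepsilon \ge (1-\varepsilon)\nu$ setwise. This gives $\nu\ll Q_\varepsilon$ with $d\nu/dQ_\varepsilon \le 1/(1-\varepsilon)$, so the same bound
\[
\KL(\nu\|Q_\varepsilon)\le -\log(1-\varepsilon)
\]
holds with no case split. This avoids needing $y_\varepsilon$ to dodge atoms, which would otherwise be possible anyway since $\nu$ has at most countably many atoms.

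Finally, $\KLinf^{\mathbb R}(\nu,m)\le -\log(1-\varepsilon)\to 0$ as $\varepsilon\downarrow 0$, and nonnegativity of KL (Gibbs' inequality) gives equality with $0$.

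The only step needing care is justifying the density bound $d\nu/dQ_\varepsilon\le 1/(1-\varepsilon)$ rigorously in the presence of a possible atom. I would handle it by writing $Q_\varepsilon = (1-\varepsilon)\nu + \varepsilon\delta_{y_\varepsilon}$ and applying Radon--Nikodym to $\nu$ relative to $Q_\varepsilon$. Since $(1-\varepsilon)\nu \le Q_\varepsilon$, the density $g = d\nu/dQ_\varepsilon$ satisfies $(1-\varepsilon)g\le 1$ $Q_\varepsilon$-a.e., and monotonicity of $\log$ finishes the argument.
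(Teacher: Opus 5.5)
Your proposal is correct and follows essentially the same route as the paper: the mixture $(1-\varepsilon)\nu+\varepsilon\delta_M$ placing a small atom far to the right, the setwise domination $Q\ge(1-\varepsilon)\nu$ giving $\nu\ll Q$ with $d\nu/dQ\le 1/(1-\varepsilon)$, and hence $\KL(\nu\|Q)\le-\log(1-\varepsilon)\to 0$. The differences are cosmetic. You place the atom exactly so the mean equals $m$, whereas the paper only takes $M$ large enough that the mean is at least $m$. You also get the density bound directly from $(1-\varepsilon)g\le 1$ $Q$-a.e., whereas the paper argues by contradiction on the sets $\{L>\tfrac{1}{1-\varepsilon}+\delta\}$.
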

\noindent What this particular proposition is telling us is precisely the consequence of being allowed to ``sprinkle'' the mass far out. That is, it's easy to pay a very small (in fact, arbitrarily small) KL cost while still achieving an increase in the mean. Why? The class of competitors is not constrained. So, a natural question that we all would have upon seeing this issue is how do we escape this issue? In other words, can we generalize beyond bounded random variables? The simple and natural answer is that we need only impose a constraint to eliminate such ``sprinkling.'' An easy expansion is if we consider the class of tail-controlled variables.\footnote{These include, for instance sub-Gaussian, sub-exponential, finite moments, etc.} For this class, let us leverage a growing envelope that will almost surely eventually contain all samples. Gorgeously, this will yield us a time constrained and nontrivial $\KLinf$. To that end, our previous analysis with random variables with bounded support can easily be extended. That will thus be the crux of this section. To begin, let's take $(B_t)_{t\ge 1}$ to be a deterministic and nondecreasing sequence whereby $B_t\to\infty$. Define then the time-$t$ constrained version as,
\[
\KL_{\inf}^{(t)}(\nu, m):=\inf\Big\{\KL(\nu\|Q): Q\in\mathcal P([-B_t, B_t]),\ \int xdQ(x)\ge m\Big\}.
\]

\noindent For fun, suppose that $\nu=\widehat P_t$. In this case, the interpretation is as follows: what is the ``budget'' with respect to the KL that we would need to perturb the empirical law into another distribution taking support on $[-B_t, B_t]$ with mean at least $m$. With all that said, we will now present our main theorem result. To prelude, it is intuitively telling us that if $B_t$ grows slowly enough compared to the LIL scale, then we again will get the sharp asymptotic LIL constant $1$ indeed. In short we present a theorem that gives a two sided LIL under a growing envelope.

\begin{theorem}\label{thm:LIL-growing-envelope}
Suppose that $X_1, X_2, X_3, \dots$ are iid real-valued random variables with mean $\mu$ and variance $\sigma^2\in(0,\infty)$. Assume that almost surely there exists a finite (possibly random) $T_B$ such that $|X_i|\le B_t$ for all $t\ge T_B$ and all $1\le i\le t$,
for some deterministic nondecreasing sequence $(B_t)_{t\ge1}$ such that as $t\to\infty$
\[
B_t = o\left(\sqrt{\frac{t}{\log\log t}}\right).
\]
 Then,
\[
\boxed{
\bbP\left(\limsup_{t\to\infty}\frac{t\,\KLinf^{(t)}(\widehat P_t,\mu)}{\log\log t}=1\right)=1.
}
\]
\end{theorem}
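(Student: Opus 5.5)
The plan is to rerun the proofs of Theorem~\ref{thm:LIL-KLinf} and Theorem~\ref{thm:LIL-KLinf-equality} with the fixed interval $[a,b]$ replaced by the moving interval $[-B_t,B_t]$. We would work on the almost-sure event where the strong law ($\widehat\mu_t\to\mu$, $\widehat\sigma_t^2\to\sigma^2$), the Hartman--Wintner LIL, and the envelope condition $|X_i|\le B_t$ for all $i\le t$, $t\ge T_B$ all hold. As in the bounded case, let $\Delta_t=(\mu-\widehat\mu_t)_+$. Exactly as in the earlier proof, the LIL gives $\limsup_t \Delta_t^2/((2\sigma^2\log\log t)/t)=1$. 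Write $a_t:=\sqrt{(2\sigma^2\log\log t)/t}$. The single quantitative fact that replaces boundedness is
\[
r_t:=\frac{\Delta_t}{\widehat\sigma_t^2}\,2B_t = O\!\left(a_t B_t\right)=o(1),
\]
which follows from $\Delta_t=O(a_t)$ eventually and the hypothesis $B_t=o(\sqrt{t/\log\log t})$. We also need $\mu\in(-B_t,B_t)$ for large $t$, which holds because $B_t\to\infty$.

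\textbf{Upper bound.} For $t\ge T_B$, $\widehat P_t$ is supported in $[-B_t,B_t]$, so $|x-\widehat\mu_t|\le 2B_t$ on the support. Take the same affine tilt $dQ_t/d\widehat P_t=1+\theta_t(x-\widehat\mu_t)$ with $\theta_t=\Delta_t/\widehat\sigma_t^2$. Since $|\theta_t(x-\widehat\mu_t)|\le r_t\to0$, the density is positive and $Q_t\in\mathcal P([-B_t,B_t])$, because it is supported on the data points. As before, $Q_t$ has mean exactly $\mu$. Lemma~\ref{lem:Taylor} with $r=r_t$ and the bound $\Ebb_{\widehat P_t}|U_t|^3\le r_t\Ebb_{\widehat P_t}U_t^2$ then give $\KLinf^{(t)}(\widehat P_t,\mu)\le (1+o(1))\Delta_t^2/(2\widehat\sigma_t^2)$, and the limsup upper bound of $1$ follows verbatim.

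\textbf{Lower bound.} Apply Lemma~\ref{lem:DV-lower} with $\varphi_\lambda(x)=\log(1+\lambda(\mu-x))$ on $[-B_t,B_t]$. This requires $\lambda(B_t-\mu)<1$, so that $\varphi_\lambda$ is bounded on the whole competitor support and not only on the data. The mean constraint on $Q$ kills the log-partition term exactly as before, which gives
\[
\KLinf^{(t)}(\widehat P_t,\mu)\ge \int\log(1+\lambda(\mu-x))\,d\widehat P_t(x).
\]
Choose $\lambda_t=\Delta_t/\widehat\sigma_t^2$. Admissibility holds because $\lambda_t(B_t+|\mu|)\lesssim a_tB_t\to0$. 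On the data we have $|\mu-X_i|\le B_t+|\mu|$, so the Taylor expansion gives the same main terms $\Delta_t^2/(2\widehat\sigma_t^2)-\Delta_t^4/(2\widehat\sigma_t^4)$. The cubic remainder must be handled with care. The crude bound $\lambda_t^3(B_t+|\mu|)^3$ is too lossy relative to $\Delta_t^2$. Instead we would bound it as
\[
\Ebb_{\widehat P_t}|U_t|^3\le \lambda_t(B_t+|\mu|)\,\Ebb_{\widehat P_t}U_t^2=\rho_t\,\lambda_t^2(\widehat\sigma_t^2+\Delta_t^2),
\qquad \rho_t:=\lambda_t(B_t+|\mu|)\to0 .
\]
This makes the remainder $o(\Delta_t^2/\widehat\sigma_t^2)$. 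We then conclude with the same $(1-\varepsilon)$ limsup argument, using that $\Delta_t^2/(2\widehat\sigma_t^2)$ has limsup of $t/\log\log t$ times it equal to $1$.

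\textbf{Main obstacle.} The crux is exactly this cubic-remainder control with unbounded support. The fixed-$(b-a)$ bound of the bounded proof no longer gives an $o(\cdot)$ term, and the argument works only because the third moment is dominated by $r_t$ (or $\rho_t$) times the second moment, with $r_t\asymp a_tB_t\to0$. This is precisely where the growth condition on $B_t$ enters. A secondary check is that admissibility of $\lambda_t$ must hold over all of $[-B_t,B_t]$, since $Q$ can charge points outside the data; this is again guaranteed by $\lambda_tB_t\to0$.
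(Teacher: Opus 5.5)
Your proposal is correct and follows the paper's proof essentially step for step. The upper bound uses the same affine tilt with radius $r_t=2B_t\Delta_t/\widehat\sigma_t^2\to 0$. The lower bound uses the same Donsker--Varadhan dual with $\lambda_t=\Delta_t/\widehat\sigma_t^2$, checked admissible on all of $[-B_t,B_t]$, and the paper also switches to the refined cubic bound $|U_t|^3\le r_t U_t^2$ there, which is exactly the fix you identify as the crux.
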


\noindent The proof of this theorem follows closely that of the bounded case, with minor adjustments. Hence we defer it to the appendix along with the lemmas and proposition proofs. But, remarkably, the takeaway is as follows. If we can control the tails in a way to get us the almost sure envelope $B_t=o(\sqrt{t/\log\log t})$, then our bounded support $\KLinf$ asymptotic LIL argument extends. Meaning, after verifying that the uniform smallness parameter $r_t:=2B_t\Delta_t/\widehat\sigma_t^2$ satisfies $r_t\to 0$ wp 1, the LIL argument extends to the time-varying constraint $\KLinf^{(t)}$. And in fact the $\limsup$ constant we get remains the sharp $1$. Note that in Appendix~\ref{sec:further-applications-beyond-bounded} we give three broad instantiations of Theorem~\ref{thm:LIL-growing-envelope} holding: sub-Gaussian, subexponential, and finite $p>2$ moments. Finally, in Appendix~\ref{sec:env-tightness} we show tightness of the envelope and why our above Theorem excludes the case where $p=2$. Given the importance of Appendix~\ref{sec:env-tightness}, we explain the main ideas as follows. First, Lemma~\ref{lem:sprinkling-upper} gives a feasible competitor $Q$, which we get from sprinkling a small mass $\varepsilon$ at the boundary point $B$. From this, we get the upper bound $\KLinf^{(t)}(\nu, m)\le \KL(\nu\|Q)\le -\log(1-\varepsilon)$, and equality in fact when $\nu(\{B\})=0$, with $\varepsilon=(m-\bar\mu)/(B-\bar\mu)$ and $-\log(1-\varepsilon)\sim \varepsilon$ as $\varepsilon\downarrow0$. This is taken further with Proposition~\ref{prop:large-envelope-collapse} where this so-called sprinkling bound is combined with the mean LIL. Specifically, through this proposition we show that whenever an almost sure valid deterministic envelope satisfies $B_t\sqrt{t/\log\log t}\to\infty$, then the constrained empirical cost will collapse on the $\log\log t$ scale. Dangerously, we would have that,
\[
\limsup_{t\to\infty}\frac{t\,\KLinf^{(t)}(\widehat P_t,\mu)}{\log\log t}=0.
\]
Then, in Lemma~\ref{lem:envelope-p2} we give a Borel Cantelli criterion under the weak second-moment assumption to show (first) validity of a particular envelope. In other words, if indeed $\sum_t 1/B_t^2<\infty$, then we have that $|X_i|\le B_t$ for all $i\le t$ eventually almost surely. That is: $B_t=\max_{s\le t}\sqrt{s}(\log s)^\gamma$ with $\gamma>\tfrac12$ is an almost sure valid deterministic envelope. We take this idea and combine it with Proposition~\ref{prop:large-envelope-collapse} in our Corollary~\ref{cor:p2-envelope-limsup0}. In particular, under $\Ebb[X_1^2]<\infty$ such envelopes would force the same collapse of the $\log\log t$ normalization. We conclude the section with Proposition~\ref{prop:p2-no-small-envelope} where we show that in general we cannot verify the key assumption of Theorem~\ref{thm:LIL-growing-envelope} under only $p=2$: a probability one envelope with $B_t=o(\sqrt{t/\log\log t})$. Note that we do this by building a distribution with finite variance for which $|X_t|>\sqrt{t/\log\log t}$ infinitely often almost surely.

Together, all of the results in Appendix~\ref{sec:env-tightness} tell us that $\sqrt{t/\log\log t}$ is a sharp boundary for deterministic envelopes. Meaning: if we go below the boundary (assuming that the envelope can be verified), we will remain in the local quadratic regime of Theorem~\ref{thm:LIL-growing-envelope} with the sharp constant $1$. The sharpness breaks, however, if we go above the envelope when sprinkling dominates and forces the normalized cost to $0$.

\section{Conclusion}
In this paper, we established the first exact LIL for the empirical $\KLinf$. Namely we showed that
\[
\limsup_{t\to\infty}\frac{t\,\KLinf(\what P_t,\mu)}{\log\log t}=1\quad\text{almost surely.}
\]
We did this first for random variables who take values on a compact support. We then extend it to unbounded data via slowly growing envelopes $[-B_t, B_t]$ with $B_t=o(\sqrt{t/\log\log t})$ that will eventually contain the data almost surely. Crucially, we have the local equivalence with probability once that $\KLinf(\hat P_t, \mu)=(1+o(1))(\mu-\what\mu_t)_+^2/(2\what\sigma_t^2)$. The benefit of this local equivalence is that it makes the iterated-logarithm constant immediate and obvious from the classical mean LIL. With all this being said, certainly, there are several promising areas of future work. An immediate one is when analyzing higher-dimensional (or multiple-moment) constraints. In these situations, curvature and feasibility become much more subtle, albeit information projections are still quite natural. In addition, our paper worked under the standard i.i.d.\ data setup. As such, it would certainly be valuable to extend these results to dependent data (eg: martingales, mixing, or Markov chains), where certainly the LIL behavior could persist. But different tools would be needed to analyze them \citep{stout1970, delaPena2009, howard2021, Dembo1998}.

\bibliography{references}

@article{khinchin1924,
  author  = {Khinchin, A.},
  title   = {{\"U}ber einen Satz der Wahrscheinlichkeitsrechnung},
  journal = {Fundamenta Mathematicae},
  volume  = {6},
  number = {1},
  pages   = {9--20},
  year    = {1924},
  url =     {http://eudml.org/doc/214283}
}

@article{Kolmogoroff1929,
  author  = {Kolmogoroff, A.},
  title   = {{\"U}ber das Gesetz des iterierten Logarithmus},
  journal = {Mathematische Annalen},
  year    = {1929},
  volume  = {101},
  number  = {1},
  pages   = {126--135},
  doi     = {10.1007/BF01454828},
  url     = {http://eudml.org/doc/159322}
}

@article{Feller1943,
  author  = {Feller, Will},
  title   = {The General Form of the So-Called Law of the Iterated Logarithm},
  journal = {Transactions of the American Mathematical Society},
  year    = {1943},
  volume  = {54},
  number  = {3},
  pages   = {373--402},
  doi     = {10.1090/S0002-9947-1943-0009263-7}
}

@article{Chung1948,
  author  = {Chung, Kai Lai},
  title   = {On the Maximum Partial Sums of Sequences of Independent Random Variables},
  journal = {Transactions of the American Mathematical Society},
  year    = {1948},
  volume  = {64},
  number  = {2},
  pages   = {205--233},
  doi     = {10.1090/S0002-9947-1948-0026274-0},
  url     = {https://www.ams.org/journals/tran/1948-064-02/S0002-9947-1948-0026274-0/}
}

@article{Strassen1964,
  author  = {Strassen, Volker},
  title   = {An Invariance Principle for the Law of the Iterated Logarithm},
  journal = {Zeitschrift f{\"u}r Wahrscheinlichkeitstheorie und Verwandte Gebiete},
  year    = {1964},
  volume  = {3},
  number  = {3},
  pages   = {211--226},
  doi     = {10.1007/BF00534910}
}

@article{deacosta1983,
  author  = {de Acosta, Alejandro},
  title   = {A New Proof of the {Hartman--Wintner} Law of the Iterated Logarithm},
  journal = {The Annals of Probability},
  year    = {1983},
  volume  = {11},
  number  = {2},
  pages   = {270--276},
  doi     = {10.1214/aop/1176993596}
}

@article{bingham1986,
  author  = {Bingham, Nicholas H.},
  title   = {Variants on the Law of the Iterated Logarithm},
  journal = {Bulletin of the London Mathematical Society},
  year    = {1986},
  volume  = {18},
  number  = {5},
  pages   = {433--467},
  doi     = {10.1112/blms/18.5.433}
}

@article{stout1970,
  author  = {Stout, William F.},
  title   = {The {Hartman--Wintner} Law of the Iterated Logarithm for Martingales},
  journal = {The Annals of Mathematical Statistics},
  year    = {1970},
  volume  = {41},
  number  = {6},
  pages   = {2158--2160},
  doi     = {10.1214/aoms/1177696716}
}

@book{stout1974,
  author    = {Stout, William F.},
  title     = {Almost Sure Convergence},
  publisher = {Academic Press},
  year      = {1974},
  isbn      = {978-0126727500}
}

@article{darlingrobbins1967,
  author  = {Darling, Donald A. and Robbins, Herbert},
  title   = {Confidence Sequences for Mean, Variance, and Median},
  journal = {Proceedings of the National Academy of Sciences},
  year    = {1967},
  volume  = {58},
  number  = {1},
  pages   = {66--68},
  doi     = {10.1073/pnas.58.1.66}
}

@misc{balsubramani2015,
      title={Sharp Finite-Time Iterated-Logarithm Martingale Concentration}, 
      author={Akshay Balsubramani},
      year={2015},
      eprint={1405.2639},
      archivePrefix={arXiv},
      primaryClass={math.PR},
      url={https://arxiv.org/abs/1405.2639}, 
}

@article{Howard2020,
  author  = {Howard, Steven R. and Ramdas, Aaditya and McAuliffe, Jon and Sekhon, Jasjeet},
  title   = {Time-uniform {Chernoff} Bounds via Nonnegative Supermartingales},
  journal = {Probability Surveys},
  year    = {2020},
  volume  = {17},
  pages   = {257--317},
  doi     = {10.1214/18-PS321}
}

@inproceedings{kaufmann2012,
  author    = {Kaufmann, Emilie and Capp{\'e}, Olivier and Garivier, Aur{\'e}lien},
  title     = {On {Bayesian} Upper Confidence Bounds for Bandit Problems},
  booktitle = {Proceedings of the Fifteenth International Conference on Artificial Intelligence and Statistics (AISTATS)},
  series    = {Proceedings of Machine Learning Research},
  volume    = {22},
  pages     = {592--600},
  year      = {2012},
  publisher = {PMLR}
}

@book{Gittins1989,
  author    = {Gittins, John C.},
  title     = {Multi-armed Bandit Allocation Indices},
  series    = {Wiley-Interscience Series in Systems and Optimization},
  publisher = {John Wiley \& Sons},
  year      = {1989},
  isbn      = {978-0471920595}
}

@article{Robbins1952,
  author  = {Robbins, Herbert},
  title   = {Some Aspects of the Sequential Design of Experiments},
  journal = {Bulletin of the American Mathematical Society},
  year    = {1952},
  volume  = {58},
  number  = {5},
  pages   = {527--535},
  doi     = {10.1090/S0002-9904-1952-09620-8}
}

@article{Teicher1974,
  author  = {Teicher, Henry},
  title   = {On the Law of the Iterated Logarithm},
  journal = {The Annals of Probability},
  year    = {1974},
  volume  = {2},
  number  = {4},
  pages   = {714--728},
  doi     = {10.1214/aop/1176996614}
}

@book{delaPena2009,
  author    = {de la Pe{\~n}a, Victor H. and Lai, Tze Leung and Shao, Qi-Man},
  title     = {Self-Normalized Processes: Limit Theory and Statistical Applications},
  series    = {Probability and Its Applications},
  publisher = {Springer},
  year      = {2009},
  doi       = {10.1007/978-3-540-85636-8},
  isbn      = {978-3-540-85635-1}
}

@book{Lattimore2020,
  author    = {Lattimore, Tor and Szepesv{\'a}ri, Csaba},
  title     = {Bandit Algorithms},
  publisher = {Cambridge University Press},
  year      = {2020},
  doi       = {10.1017/9781108571401},
  isbn      = {978-1108486828}
}

@inproceedings{hondatakemura2010,
  author    = {Honda, Junya and Takemura, Akimichi},
  title     = {An Asymptotically Optimal Bandit Algorithm for Bounded Support Models},
  booktitle = {Proceedings of the 23rd Conference on Learning Theory (COLT)},
  pages     = {67--79},
  year      = {2010}
}

@article{cappe2013,
  author  = {Capp{\'e}, Olivier and Garivier, Aur{\'e}lien and Maillard, Odalric-Ambrym and Munos, R{\'e}mi and Stoltz, Gilles},
  title   = {{Kullback--Leibler} Upper Confidence Bounds for Optimal Sequential Allocation},
  journal = {The Annals of Statistics},
  year    = {1924},
  volume  = {41},
  number  = {3},
  pages   = {1516--1541},
  doi     = {10.1214/13-AOS1119}
}

@article{hondatakemura2015,
  author  = {Honda, Junya and Takemura, Akimichi},
  title   = {Non-asymptotic Analysis of a New Bandit Algorithm for Semi-bounded Rewards},
  journal = {Journal of Machine Learning Research},
  volume  = {16},
  number  = {1},
  pages   = {3721--3756},
  year    = {2015}
}

@inproceedings{gariviercappe2011,
  author    = {Garivier, Aur{\'e}lien and Capp{\'e}, Olivier},
  title     = {The {KL-UCB} Algorithm for Bounded Stochastic Bandits and Beyond},
  booktitle = {Proceedings of the 24th Annual Conference on Learning Theory (COLT)},
  series    = {Proceedings of Machine Learning Research},
  volume    = {19},
  pages     = {359--376},
  year      = {2011},
  publisher = {PMLR}
}

@article{Bubeck2012,
  author  = {Bubeck, S{\'e}bastien and Cesa-Bianchi, Nicol{\`o}},
  title   = {Regret Analysis of Stochastic and Nonstochastic Multi-armed Bandit Problems},
  journal = {Foundations and Trends{\textregistered} in Machine Learning},
  year    = {2012},
  volume  = {5},
  number  = {1},
  pages   = {1--122},
  doi     = {10.1561/2200000024}
}

@article{Auer2002,
  author  = {Auer, Peter and Cesa-Bianchi, Nicol{\`o} and Fischer, Paul},
  title   = {Finite-time Analysis of the Multiarmed Bandit Problem},
  journal = {Machine Learning},
  year    = {2002},
  volume  = {47},
  number  = {2},
  pages   = {235--256},
  doi     = {10.1023/A:1013689704352}
}

@article{Burnetas1996,
  author  = {Burnetas, Apostolos N. and Katehakis, Michael N.},
  title   = {Optimal Adaptive Policies for Sequential Allocation Problems},
  journal = {Advances in Applied Mathematics},
  year    = {1996},
  volume  = {17},
  number  = {2},
  pages   = {122--142},
  doi     = {10.1006/aama.1996.0007}
}

@article{Lai1985,
  author  = {Lai, Tze Leung and Robbins, Herbert},
  title   = {Asymptotically Efficient Adaptive Allocation Rules},
  journal = {Advances in Applied Mathematics},
  year    = {1985},
  volume  = {6},
  number  = {1},
  pages   = {4--22},
  doi     = {10.1016/0196-8858(85)90002-8}
}

@book{Owen2001,
  author    = {Owen, Art B.},
  title     = {Empirical Likelihood},
  series    = {Monographs on Statistics and Applied Probability},
  volume    = {92},
  publisher = {Chapman \& Hall/CRC},
  year      = {2001},
  doi       = {10.1201/9781420036152},
  isbn      = {978-1584880714}
}

@article{Owen1988,
  author  = {Owen, Art B.},
  title   = {Empirical Likelihood Ratio Confidence Intervals for a Single Functional},
  journal = {Biometrika},
  year    = {1988},
  volume  = {75},
  number  = {2},
  pages   = {237--249},
  doi     = {10.1093/biomet/75.2.237}
}

@book{Boucheron2013,
  author    = {Boucheron, St{\'e}phane and Lugosi, G{\'a}bor and Massart, Pascal},
  title     = {Concentration Inequalities: A Nonasymptotic Theory of Independence},
  publisher = {Oxford University Press},
  year      = {2013},
  doi       = {10.1093/acprof:oso/9780199535255.001.0001},
  isbn      = {978-0199535255}
}

@book{Dupuis1997,
  author    = {Dupuis, Paul and Ellis, Richard S.},
  title     = {A Weak Convergence Approach to the Theory of Large Deviations},
  series    = {Wiley Series in Probability and Statistics},
  publisher = {Wiley-Interscience},
  year      = {1997},
  doi       = {10.1002/9781118165904},
  isbn      = {978-0-471-07672-8}
}

@book{Dembo1998,
  author    = {Dembo, Amir and Zeitouni, Ofer},
  title     = {Large Deviations Techniques and Applications},
  series    = {Stochastic Modelling and Applied Probability},
  volume    = {38},
  edition   = {2nd},
  publisher = {Springer},
  year      = {1998},
}

@article{Sanov1957,
  author  = {Sanov, {Igor Nikolaevich}},
  title   = {On the Probability of Large Deviations of Random Magnitudes},
  journal = {Matematicheskii Sbornik (N.S.)},
  year    = {1957},
  volume  = {42(84)},
  number  = {1},
  pages   = {11--44}
}

@article{Csiszar2003,
  author  = {Csisz{\'a}r, Imre and Mat{\'u}{\v{s}}, Franti{\v{s}}ek},
  title   = {Information Projections Revisited},
  journal = {IEEE Transactions on Information Theory},
  year    = {2003},
  volume  = {49},
  number  = {6},
  pages   = {1474--1490},
  doi     = {10.1109/TIT.2003.810633}
}

@article{Csiszar1984,
  author  = {Csisz{\'a}r, Imre},
  title   = {Sanov Property, Generalized {$I$}-Projection and a Conditional Limit Theorem},
  journal = {The Annals of Probability},
  year    = {1984},
  volume  = {12},
  number  = {3},
  pages   = {768--793},
  doi     = {10.1214/aop/1176993227}
}

@article{Csiszar1975,
  author  = {Csisz{\'a}r, Imre},
  title   = {{$I$}-Divergence Geometry of Probability Distributions and Minimization Problems},
  journal = {The Annals of Probability},
  year    = {1975},
  volume  = {3},
  number  = {1},
  pages   = {146--158},
  doi     = {10.1214/aop/1176996454}
}

@article{Cramer1938,
  author  = {Cram{\'e}r, Harald},
  title   = {Sur un nouveau th{\'e}or{\`e}me-limite de la th{\'e}orie des probabilit{\'e}s},
  journal = {Actualit{\'e}s Scientifiques et Industrielles},
  year    = {1938},
  volume  = {736},
  pages   = {5--23},
  publisher = {Hermann {\&} Cie}
}

@article{Kullback1951,
  author  = {Kullback, Solomon and Leibler, Richard A.},
  title   = {On Information and Sufficiency},
  journal = {The Annals of Mathematical Statistics},
  year    = {1951},
  volume  = {22},
  number  = {1},
  pages   = {79--86},
  doi     = {10.1214/aoms/1177729694}
}

@article{Einmahl2004,
  author  = {Einmahl, John H. J. and Rosalsky, Andrew},
  title   = {The Functional Law of the Iterated Logarithm for the Empirical Process Based on Sample Means},
  journal = {Journal of Theoretical Probability},
  year    = {2001},
  volume  = {14},
  number  = {2},
  pages   = {577--597},
  doi     = {10.1023/A:1011128101094}
}

@article{Mason2004,
  author  = {Mason, David M.},
  title   = {A Uniform Functional Law of the Logarithm for the Local Empirical Process},
  journal = {The Annals of Probability},
  year    = {2004},
  volume  = {32},
  number  = {2},
  pages   = {1391--1418},
  doi     = {10.1214/009117904000000243}
}

@article{Deheuvels1994,
  author  = {Deheuvels, Paul and Mason, David M.},
  title   = {Functional Laws of the Iterated Logarithm for Local Empirical Processes Indexed by Sets},
  journal = {The Annals of Probability},
  year    = {1994},
  volume  = {22},
  number  = {3},
  pages   = {1619--1661},
  doi     = {10.1214/aop/1176988617}
}

@book{vanderVaart1996,
  author    = {van der Vaart, Aad W. and Wellner, Jon A.},
  title     = {Weak Convergence and Empirical Processes: With Applications to Statistics},
  series    = {Springer Series in Statistics},
  publisher = {Springer},
  year      = {1996},
  doi       = {10.1007/978-1-4757-2545-2},
  isbn      = {978-0-387-94640-5}
}

@book{Shorack1986,
  author    = {Shorack, Galen R. and Wellner, Jon A.},
  title     = {Empirical Processes with Applications to Statistics},
  series    = {Wiley Series in Probability and Mathematical Statistics},
  publisher = {John Wiley \& Sons},
  year      = {1986},
}

@article{howard2021,
  author  = {Steven R. Howard and Aaditya Ramdas and Jon McAuliffe and Jasjeet S. Sekhon},
  title   = {Time-uniform, nonparametric, nonasymptotic confidence sequences},
  journal = {Annals of Statistics},
  year    = {2021},
  volume  = {49},
  number  = {2},
  pages   = {1055--1080},
  doi     = {10.1214/20-AOS1991}
}

@article{DonskerVaradhan1975,
  author  = {Monroe D. Donsker and S. R. Srinivasa Varadhan},
  title   = {Asymptotic Evaluation of Certain Markov Process Expectations for Large Time, I},
  journal = {Communications on Pure and Applied Mathematics},
  year    = {1975},
  volume  = {28},
  number  = {1},
  pages   = {1--47},
  doi     = {10.1002/cpa.3160280102}
}

@book{wald1947,
  author = {Wald, Abraham},
  title  = {Sequential Analysis},
  publisher = {John Wiley \& Sons},
  year = {1947}
}

@article{hartman1941law,
  title={On the law of the iterated logarithm},
  author={Hartman, Philip and Wintner, Aurel},
  journal={American Journal of Mathematics},
  volume={63},
  number={1},
  pages={169--176},
  year={1941},
  publisher={Johns Hopkins University Press}
}
\appendix
\section{Omitted Proofs}\label{sec:full-proofs}
\begin{proof}[Proof of Lemma~\ref{lem:Taylor}]
This is a very easy proof to see. Let's first fix $r\in(0,1)$ and of course $u\in[-r, r]$. We will use Taylor's theorem with a remainder to approximate $f(u)$ around $u=0$. We know that $f(u)=-\log(1+u)$ is thrice differentiable (ie, in $C^3$) on the interval $[-r, r]$. Therefore, we can write the third order expansion as, 
\[
f(u) = f(0)+f'(0)u+\frac{f''(0)}{2}u^2+\frac{f^{(3)}(\xi)}{6}u^3,
\]
where $\xi$ is some intermediate value that's between $0$ and $u$. Let us now calculate the first three derivatives of $f(u)$ and evaluate them at $u=0$. Clearly, $f(0)=-\log(1+0)=0.$ Then, $f'(u)=-\frac{1}{1+u}$, so $f'(0)=-1$, and $f''(u)=\frac{1}{(1+u)^2}$, so $f''(0)=1$. Finally, $f^{(3)}(v)=-\frac{2}{(1+v)^3}$. Obviously, since $\xi\in[-r, r]$, we have that $(1+\xi)\ge (1-r)$. Hence we can bound the remainder term easily to not depend on the exact value of $\xi$: $|f^{(3)}(\xi)| \le \frac{2}{(1-r)^3}$. Therefore, we get that,
\[
f(u) \le -u + \frac{u^2}{2} + \frac{1}{6}\cdot \frac{2}{(1-r)^3}\,|u|^3 = -u + \frac{u^2}{2} + \frac{|u|^3}{3(1-r)^3},
\]
which was exactly our claim and hence we are done!
\end{proof}

\begin{proof}[Proof of Lemma~\ref{lem:DV-lower}]
Suppose $\int e^{\varphi}dQ=+\infty$. If this indeed were the case, then we'd know that the right hand side in our lemma becomes $-\infty$. As such, the inequality would hold trivially. Therefore, wlog assume that $\int e^{\varphi}dQ\in(0,\infty)$ and set $Z:=\int e^{\varphi}dQ$. Then let us define a probability measure $Q^{\varphi}$ by exponential tilting. That is, $\frac{dQ^{\varphi}}{dQ}:=\frac{e^{\varphi}}{Z}$. We know that $\varphi$ is real-valued, so it follows that $e^{\varphi}>0$ everywhere, so it follows that $Q^{\varphi}$ and $Q$ have the same null sets. Specifically, $\nu\ll Q$ implies that $\nu\ll Q^{\varphi}$. Looking at the set where $\frac{d\nu}{dQ^{\varphi}}$ is defined which is $\nu$ almost surely, let us write the Radon-Nikodym chain rule as, $\frac{d\nu}{dQ}=\frac{d\nu}{dQ^{\varphi}}\cdot\frac{dQ^{\varphi}}{dQ}$. Therefore, we get that $\log\left(\frac{d\nu}{dQ}\right)=\log\left(\frac{d\nu}{dQ^{\varphi}}\right)+\log\left(\frac{dQ^{\varphi}}{dQ}\right) = \log\left(\frac{d\nu}{dQ^{\varphi}}\right)+\varphi-\log Z$. Let us now integrate both sides with respect to $\nu$. If we do this we get that,
\begin{align*}
\KL(\nu\|Q)
&= \int \log\left(\frac{d\nu}{dQ}\right)\,d\nu \\
&= \int \log\left(\frac{d\nu}{dQ^{\varphi}}\right)\,d\nu+\int \varphi\,d\nu-\log Z \\
&= \KL(\nu\|Q^{\varphi})+\int \varphi\,d\nu-\log\left(\int e^{\varphi}\,dQ\right).
\end{align*}

\noindent Clearly, $\KL(\nu\|Q^{\varphi})\ge 0$, and it may be $+\infty$ also. But in any case, using this fact, we get that,
\[
\KL(\nu\|Q)\ge \int \varphi d\nu -\log\left(\int e^{\varphi}dQ\right),
\]
which was to be shown and hence that concludes the proof.
\end{proof}

\begin{comment}
\begin{proof}[Proof of Lemma~\ref{lem:taylor-lower-log}]
This is a very basic proof to see. Take a particular $r\in(0,1)$ and $u\in[-r,r]$. We know that the function $g(u)=\log(1+u)$ is thrice differentiable on $[-r,r]$, so Taylor's theorem with a remainder at $0$ gives us,
\[
g(u)=g(0)+g'(0)u+\frac{g''(0)}{2}u^2 + \frac{g^{(3)}(\xi)}{6}u^3,
\]
for some $\xi$ between $0$ and $u$ of course. Let's now compute these derivatives. Clearly, we know that $g(0)=0$. Second, $g'(u)=\frac{1}{1+u}$, which means that $g'(0)=1$. Third, $g''(u)=-\frac{1}{(1+u)^2}$, so $g''(0)=-1$. Fourth, $g^{(3)}(v)=\frac{2}{(1+v)^3}$. Recall that $\xi\in[-r,r]$, so it follows that $1+\xi\ge 1-r$. Therefore, $0<g^{(3)}(\xi)=\frac{2}{(1+\xi)^3}\le \frac{2}{(1-r)^3}$. Therefore, the remainder must satisfy $\left|\frac{g^{(3)}(\xi)}{6}u^3\right|\le \frac{1}{6}\cdot \frac{2}{(1-r)^3}|u|^3=\frac{|u|^3}{3(1-r)^3}$. Let's now plug this back into our Taylor formula to get that,
\[
g(u)=u-\frac{u^2}{2}+\frac{g^{(3)}(\xi)}{6}u^3 \ge u-\frac{u^2}{2}-\frac{|u|^3}{3(1-r)^3},
\]
which was exactly our claim and thus we are done.
\end{proof}
\end{comment}

\begin{proof}[Proof of Proposition~\ref{prop:KLinf-degenerate}]
We already gave intuition for this proof earlier in our first section, hence it's going to be a very obvious proof. To begin, take a $\varepsilon\in(0,1)$ and pick $M\in\mathbb R$ large enough so that we can have $(1-\varepsilon)\bar \mu+\varepsilon M\ge m$. Then, define the mixture $Q_{\varepsilon,M}:=(1-\varepsilon)\nu+\varepsilon\delta_M$. Two things then follow. First, we know that $Q_{\varepsilon,M}$ must be a probability measure on $\mathbb R$. Second, it also necessarily satisfies the mean constraint. It's easy to see why. By definition it satisfies that $\int x dQ_{\varepsilon,M}(x)=(1-\varepsilon)\int xd\nu(x)+\varepsilon M=(1-\varepsilon)\bar\mu +\varepsilon M\ge m$ by our construction of $M$. Also, for any measurable set $A$, we have the relation that $Q_{\varepsilon, M}(A)\ge (1-\varepsilon)\nu(A)$, so it must follow that $Q_{\varepsilon, M}$ dominates $\nu$. This is particularly important for us to note here because we can conclude that if $Q_{\varepsilon,M}(A)$ were to be $0$, $\nu(A)=0$ follows. So, $\nu\ll Q_{\varepsilon,M}$ and $\KL(\nu\|Q_{\varepsilon,M})$ is finite. We now will upper bound $\KL(\nu\|Q_{\varepsilon,M})$. Now, let $L:=\frac{d\nu}{dQ_{\varepsilon,M}}$. We will claim that $\nu$-almost-surely, we have that $L\le \frac{1}{1-\varepsilon}$. It's very easy to see why. Indeed, take an arbitrary $\delta>0$ and let $A_{\delta}:=\{L>\frac{1}{1-\varepsilon}+\delta\}$. Then it follows that,
\[
\nu(A_{\delta})=\int_{A_{\delta}} L\,dQ_{\varepsilon,M}>\Big(\frac{1}{1-\varepsilon}+\delta\Big)Q_{\varepsilon,M}(A_{\delta}) \ge \Big(\frac{1}{1-\varepsilon}+\delta\Big)(1-\varepsilon)\nu(A_{\delta}) =\big(1+\delta(1-\varepsilon)\big)\nu(A_{\delta}).
\]
From here, we can see that $\nu(A_{\delta})$ is forced to be $0$ in this case (if it wasn't then that would contradict the fact that it's a probability measure and nonnegative). Now, we know that $\delta>0$ was arbitrary. Hence, given that $\nu(A_\delta)$ is a measure $0$ event, it follows that almost surely under $\nu$, indeed $L\le \frac{1}{1-\varepsilon}$. Therefore, we can conclude that,
\[
\KL(\nu\|Q_{\varepsilon,M}) =\int \log\left(\frac{d\nu}{dQ_{\varepsilon,M}}\right)\,d\nu =\int \log(L)\,d\nu \le \log\left(\frac{1}{1-\varepsilon}\right) =-\log(1-\varepsilon).
\]

\noindent Note that we also chose $\varepsilon\in(0,1)$ arbitrarily. And, $-\log(1-\varepsilon)\downarrow 0$ as $\varepsilon\downarrow 0$. Hence we get that $0\le \KLinf^{\mathbb R}(\nu, m)\le \inf_{\varepsilon\in(0,1)}(-\log(1-\varepsilon))=0$. And, thus, $\KLinf^{\mathbb R}(\nu, m)$ is $0$ and so we are done.\end{proof}

\begin{proof}[Proof of Theorem~\ref{thm:LIL-growing-envelope}]
Before we begin this proof, we will clarify that throughout, we are working on the probability one event on which three things hold: the mean LIL, the fact that $\widehat\sigma_t^2\to\sigma^2$, and that the envelope property holds for all large $t$. Now to begin, let us consider some particular but arbitrarily chosen outcome $\omega$ and restrict to those $t$ that are large enough so that $|X_i(\omega)|\le B_t$ holds for all $i\le t$. Then, $\widehat P_t$ indeed is supported on $[-B_t, B_t]$. Now, at this point we know that our proofs from the bounded support case automatically apply just by simply making the replacement of $[a,b]=[-B_t, B_t]$, $b-a=2B_t$, and with $\KLinf$ replaced by $\KLinf^{(t)}$. In the case where $m>B_t$, we know that the constraint set is empty and thus that $\KLinf^{(t)}(\nu, m)=+\infty$. But for us in our application, $m=\mu$ and $B_t\to\infty$. As a consequence, for all sufficiently large $t$, our constraint indeed is of course feasible. Because we know that our Theorem~\ref{thm:LIL-KLinf}'s upper bound construction uses the affine tilt $\frac{dQ_t}{d\widehat P_t}(x)=1+\theta_t(x-\widehat\mu_t)$ and $\theta_t=\frac{\Delta_t}{\widehat\sigma_t^2}$ and $\Delta_t=(\mu-\widehat\mu_t)_+$. Notice how the only place really where we needed the actual boundedness was in guaranteeing that $1+\theta_t(x-\widehat\mu_t)$ remains positive uniformly over the support. In this situation here, we know that $x\in[-B_t, B_t]$ and $\widehat\mu_t\in[-B_t, B_t]$, and so as a result we get that $|x-\widehat\mu_t|\le 2B_t$. Thus it follows that,
\[
\inf_{x\in[-B_t,B_t]}(1+\theta_t(x-\widehat\mu_t))\ge 1-2B_t\theta_t.
\]
Hence it would suffice that only $2B_t\theta_t\to 0$ almost surely. We know that $\theta_t=\Delta_t/\widehat\sigma_t^2$ and that $\widehat\sigma_t^2\to\sigma^2>0$. The mean LIL tells us that $\Delta_t$ is in fact $O(\sqrt{(\log\log t)/t})$ almost surely along the full sequence (ie, by the limsup). Therefore, we can see that almost surely,
\[
2B_t\theta_t \ =\ 2B_t\frac{\Delta_t}{\widehat\sigma_t^2} \ =\ o\left(\sqrt{\frac{t}{\log\log t}}\right)\cdot O\left(\sqrt{\frac{\log\log t}{t}}\right) \ \xrightarrow[t\to\infty]{}\ 0.
\]
What this tells us is the tilt is going to be feasible for all large $t$ (and certainly supported on $[-B_t, B_t]$). As such, it will be admissible for $\KLinf^{(t)}(\widehat P_t, \mu)$. One more thing we need to account for in this upper bound is the Taylor remainder terms. However, in the bounded proof we know that those Taylor terms scale with powers of the uniform radius $r_t:=\theta_t(b-a)=\theta_t(2B_t)=2B_t\theta_t$. We just showed that $r_t\to 0$ wp 1. So, the same argument gives us the upper asymptotic bound that almost surely,
\[
\KLinf^{(t)}(\widehat P_t, \mu)\le (1+o(1))\frac{\Delta_t^2}{2\widehat\sigma_t^2}.
\]

\noindent Let's now work on the lower bound now to complete our proof. We will use our same lower bound variational argument we used earlier. However, in doing so we will keep track of the remainder in a way that will only depend on the uniform smallness parameter. Take a particular $t$ that's large enough so that $\widehat P_t$ is supported on $[-B_t, B_t]$ and $B_t\ge |\mu|$ (we know this certainly must hold eventually since $B_t$ is sent to $\infty$). Now let us consider some feasible $Q\in\mathcal P([-B_t, B_t])$ where $\int x dQ(x)\ge \mu$ and any $\lambda\in[0, 1/(B_t-\mu))$. Then, exactly as in \eqref{eq:dual-lower-bound}, we know that the same choice of $\varphi_\lambda(x):=\log(1+\lambda(\mu-x))$, we have that,
\[
\KL_{\inf}^{(t)}(\widehat P_t,\mu)\ \ge \int \log\big(1+\lambda(\mu-x)\big)\,d\widehat P_t(x).
\]
Now let us pick a $\lambda=\lambda_t:=\Delta_t/\widehat\sigma_t^2$. Note that for large $t$, we know that this lies in $[0, 1/(B_t-\mu))$. (Because, we know that eventually, $B_t\ge |\mu|$ and that wp 1, $r_t:=2B_t\lambda_t\to 0$. Hence it certainly follows that for all those $t$ large enough that $\lambda_t(B_t-\mu)\le \lambda_t(B_t+|\mu|)\le 2B_t\lambda_t=r_t<1$.) Now, let $X\sim \widehat P_t$ and set $U_t:=\lambda_t(\mu-X)$. Then it follows that for all $x\in[-B_t, B_t]$, $|\mu-x|\le |\mu|+B_t\le 2B_t$. Hence, $|U_t|\le \lambda_t\cdot 2B_t=:r_t$. On our almost sure event now, $\Delta_t = O(\sqrt{(\log\log t)/t})$ and $\widehat\sigma_t^2\to\sigma^2>0$. As a consequence we have that with probability one,
\[
r_t=2B_t\lambda_t=2B_t\frac{\Delta_t}{\widehat\sigma_t^2}\xrightarrow[t\to\infty]{}0.
\]
Meaning, for all $t$ large enough we get that $r_t<1$. In the case where $r_t=0$, we would have that $U_t\equiv 0$, so the inequality would be trivial. Thus when applying Lemma~\ref{lem:Taylor} we can restrict to the case where $r_t\in(0,1)$. Let's do this, ie apply Lemma~\ref{lem:Taylor} with $r=r_t$. Doing so gives us,
\[
\log(1+U_t)\ \ge\ U_t-\frac{U_t^2}{2}-\frac{|U_t|^3}{3(1-r_t)^3}.
\]
Further, we know that since $|U_t|\le r_t$ as we just showed above, it follows that $|U_t|^3\le r_t U_t^2$. Let's now take expectations. Doing so and using all this gives us that, 
\[
\Ebb_{\widehat P_t}[\log(1+U_t)]  \ge \Ebb_{\widehat P_t}[U_t] - \left(\frac{1}{2}+\frac{r_t}{3(1-r_t)^3}\right)\Ebb_{\widehat P_t}[U_t^2].
\]

\noindent Let us look at the relevant case for us, specifically the case where $\Delta_t>0$, so necessarily $\lambda_t>0$. In this case, we have that $\mu-\widehat\mu_t=\Delta_t$. In addition, we know that $\Ebb_{\widehat P_t}[U_t]=\lambda_t(\mu-\widehat\mu_t)=\lambda_t\Delta_t$. And, $\Ebb_{\widehat P_t}[U_t^2]=\lambda_t^2\Ebb_{\widehat P_t}[(\mu-X)^2]=\lambda_t^2(\widehat\sigma_t^2+\Delta_t^2)$. Note that the last step before holds as we did previously, using the fact that $(\mu-X)=(\mu-\widehat\mu_t)+(\widehat\mu_t-X)$ and $\Ebb_{\widehat P_t}[\widehat\mu_t-X]=0$. Let us now plug in $\lambda_t=\Delta_t/\widehat\sigma_t^2$. Doing so gives us that $\Ebb_{\widehat P_t}[U_t]=\frac{\Delta_t^2}{\widehat\sigma_t^2}$ and $\Ebb_{\widehat P_t}[U_t^2]=\frac{\Delta_t^2}{\widehat\sigma_t^2}+\frac{\Delta_t^4}{\widehat\sigma_t^4}$. Therefore we get that,
\[
\Ebb_{\widehat P_t}[\log(1+U_t)] \ \ge\ \frac{\Delta_t^2}{\widehat\sigma_t^2} -\left(\frac{1}{2}+\frac{r_t}{3(1-r_t)^3}\right)\left(\frac{\Delta_t^2}{\widehat\sigma_t^2}+\frac{\Delta_t^4}{\widehat\sigma_t^4}\right).
\]
Note that $r_t\to 0$ and $\Delta_t\to 0$ wp 1 as we know. Therefore as a result of this, the term $\frac{1}{2}+\frac{r_t}{3(1-r_t)^3}$ is simply nothing but $\frac{1}{2}+o(1)$ and the last term with $\Delta_t^4$ of course reduces down $o(\Delta_t^2)$. Hence we get that with probability one,
\[
\Ebb_{\widehat P_t}[\log(1+U_t)] \ge (1-o(1))\frac{\Delta_t^2}{2\widehat\sigma_t^2}.
\]

\noindent Now, recall that for our choice of $\lambda_t$, we had that $\KLinf^{(t)}(\widehat P_t, \mu)\ge \Ebb_{\widehat P_t}[\log(1+U_t)]$. Thus we get almost surely,
\[
\KL_{\inf}^{(t)}(\widehat P_t,\mu) \ge (1-o(1))\frac{\Delta_t^2}{2\widehat\sigma_t^2}.
\]
Now, we already showed the upper bound that $\KLinf^{(t)}(\widehat P_t, \mu)\le (1+o(1))\frac{\Delta_t^2}{2\widehat\sigma_t^2}$. So, we have both an upper and lower version! Hence it must follow that almost surely as $t\to\infty$,
\[
\KL_{\inf}^{(t)}(\widehat P_t,\mu)=\big(1+o(1)\big)\frac{\Delta_t^2}{2\widehat\sigma_t^2}.
\]
We know that the classic mean LIL implies that, as in \eqref{eq:Delta-LIL}, almost surely $\limsup_{t\to\infty}\frac{\Delta_t^2}{(2\sigma^2\log\log t)/t}=1$. Now, we have the almost sure convergence also of $\widehat\sigma_t^2\to\sigma^2$. That tells us that,
\[
\limsup_{t\to\infty}\frac{t}{\log\log t}\cdot\frac{\Delta_t^2}{2\widehat\sigma_t^2}=1,
\]
and so therefore using our proven asymptotic equivalence we get that with probability one,
\[
\limsup_{t\to\infty}\frac{t\,\KL_{\inf}^{(t)}(\widehat P_t,\mu)}{\log\log t}=1,
\]
which completes our proof! \end{proof}

\section{Applications of Theorem~\ref{thm:LIL-growing-envelope}}\label{sec:further-applications-beyond-bounded}
\noindent Now, the reason we know that Theorem~\ref{thm:LIL-growing-envelope} is so powerful is in its reduction of the problem to simply verifying the almost sure envelope $|X_i|\le B_t$ for all $i\le t$ eventually, where $B_t=o(\sqrt{t/\log\log t})$. Such a theorem is only useful if we know it holds for various tail regimes. Hence that is what we will show. The first is for \textit{subgaussian} tails. Meaning, assume that there exists $v>0$ such that for all $x\ge 0$,
\begin{equation}\label{eq:subG-tail}
\Pbb(|X_1-\mu|>x)\le 2\exp\left(-\frac{x^2}{2v}\right).    
\end{equation}
Given this definition of the subgaussian tail, we will present the lemma below for the almost sure envelope for subgaussian random variables.

\begin{lemma}\label{lem:envelope-subG}
Assume that \eqref{eq:subG-tail} holds. Consider any particular $\varepsilon>0$ and define,
\[
u_t:=\sqrt{2(v+\varepsilon)\log t}, \qquad
B_t:=|\mu|+u_t.
\]
Then, almost surely there exists $T<\infty$ such that for all $t\ge T$ and all $1\le i\le t$, we have that $|X_i|\le B_t$. And moreover, $B_t=o(\sqrt{t/\log\log t})$.
\end{lemma}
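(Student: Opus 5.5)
The plan is a single Borel--Cantelli argument on the events $\{|X_t-\mu|>u_t\}$, combined with monotonicity of $u_t$ to pass from control of the single new observation $X_t$ to control of the whole prefix $X_1,\dots,X_t$.

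\textbf{Step 1: the tail bound is summable.} By \eqref{eq:subG-tail},
\[
\Pbb(|X_t-\mu|>u_t)\le 2\exp\!\Big(-\frac{2(v+\varepsilon)\log t}{2v}\Big)=2t^{-(1+\varepsilon/v)}.
\]
Since $\varepsilon/v>0$, this is summable in $t$. The first Borel--Cantelli lemma then gives, almost surely, a finite random $T_0$ such that $|X_t-\mu|\le u_t$ for all $t\ge T_0$. Independence is not needed here.

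\textbf{Step 2: from single draws to the whole prefix.} This is the main (mild) obstacle. For $t\ge T_0$ and $T_0\le i\le t$, monotonicity of $u$ gives $|X_i-\mu|\le u_i\le u_t$. The finitely many early indices $i<T_0$ are handled by setting $M:=\max_{i<T_0}|X_i-\mu|$, which is finite almost surely. Because $u_t\to\infty$, there is a random $T_1$ with $u_t\ge M$ for all $t\ge T_1$. Taking $T:=\max(T_0,T_1)$, for every $t\ge T$ and every $i\le t$ we have $|X_i-\mu|\le u_t$. Hence $|X_i|\le|\mu|+u_t=B_t$.

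\textbf{Step 3: the growth condition.} We need $B_t\sqrt{\log\log t/t}\to0$. The constant $|\mu|$ contributes $|\mu|\sqrt{\log\log t/t}\to0$. The other term satisfies
\[
u_t\sqrt{\log\log t/t}=\sqrt{2(v+\varepsilon)\,\log t\,\log\log t/t}\to0,
\]
since $\log t\log\log t=o(t)$.

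\textbf{Regularity of $B_t$.} Theorem~\ref{thm:LIL-growing-envelope} also requires $B_t$ to be deterministic and nondecreasing; both hold by construction. For $t=1$ we have $\log t=0$, which is harmless.
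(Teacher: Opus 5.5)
Your proposal is correct and follows the paper's proof essentially step for step: first Borel--Cantelli applied to $\{|X_t-\mu|>u_t\}$ with the summable bound $2t^{-(1+\varepsilon/v)}$, then monotonicity of $u_t$ together with a finite random maximum over the early indices to control the whole prefix, and finally a direct check that $B_t\sqrt{\log\log t/t}\to 0$. Your exponent $-(1+\varepsilon/v)$ is the correct one; the paper's displayed $2t^{(-1+\varepsilon/v)}$ is a sign typo.
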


\begin{proof}
By stationarity, it follows that for each integer $t\ge 3$ we have that,
\[
\Pbb(|X_t-\mu|>u_t)=\Pbb(|X_1-\mu|>u_t)\le 2\exp\left(-\frac{u_t^2}{2v}\right)=2\exp\left(-\frac{v+\varepsilon}{v}\log t\right)=2t^{(-1+\varepsilon/v)}.
\]
Notice that $1+\varepsilon/v>1$, hence it follows that the series $\sum_{t\ge 3} t^{-(1+\varepsilon/v)}$ indeed converges. Meaning, $\sum_{t=3}^\infty \Pbb(|X_t-\mu|>u_t)<\infty$. So by Borel-Cantelli Lemma 1, we can conclude that $\Pbb(|X_t-\mu|>u_t$ infinitely often) must be $0$ indeed. In other words, wp 1 there exists a random $T_0$ such that for all $t\ge T_0$ we have that $|X_t-\mu|\le u_t$. Now, we know that $u_t$ is nondecreasing in $t$, so it follows that for any $t\ge T_0$, $\max_{T_0\le i\le t}|X_i-\mu|\le u_t$. Let us now define the (finite) random constant $M_0:=\max_{1\le i\le T_0}|X_i-\mu|$. We know that $u_t\to\infty$. As a consequence, it follows that there exists $T_1$ such that for all $t\ge T_1$, $u_t\ge M_0$. As such, for all $t\ge \max\{T_0, T_1\}$, we have,
\[
\max_{1\le i\le t}|X_i-\mu|=\max\Big\{\max_{1\le i\le T_0}|X_i-\mu|,\ \max_{T_0\le i\le t}|X_i-\mu|\Big\}\le \max\{M_0, u_t\} = u_t.
\]
So, it follows that for all such $t$ and all $1\le i\le t$, $|X_i|\le |\mu|+|X_i-\mu|\le |\mu|+u_t=B_t$. This proves our envelope property. To finish, note that our $B_t=|\mu|+\sqrt{2(v+\varepsilon)\log t}$ satisfies,
\[
\frac{B_t}{\sqrt{t/\log\log t}}=\frac{|\mu|}{\sqrt{t/\log\log t}}+\sqrt{2(v+\varepsilon)}\frac{\sqrt{\log t}\,\sqrt{\log\log t}}{\sqrt{t}}\xrightarrow[t\to\infty]{}0.
\]
Thus, the takeaway is that indeed $B_t=o(\sqrt{t/\log\log t})$. At this point, we have proved that sub-gaussian tails satisfy both properties. So we are done.
\end{proof}

\noindent Notice how if we take the result of Lemma~\ref{lem:envelope-subG} and Theorem~\ref{thm:LIL-growing-envelope} we get our desired ``same'' LIL statement for subGaussian random variables. That is, constraining the other class to an envelope that grows slowly that eventually contains the data almost surely gives us something beautiful. Something where the empirical $\KLinf$ once again satisfies an exact LIL with a sharp constant $1$. Of course, in addition to subGaussian, we can also choose such a sequence for sub-exponential random variables. Namely, we can construct carefully an almost sure envelope for sub-exponential random variables also.

\begin{lemma}\label{lem:envelope-subexp}
Suppose that there exist constants $K, c>0$ such that for all $x\ge 0$,
\[
\Pbb(|X_1-\mu|>x)\le K e^{-cx}.
\]
Then consider any particular $\varepsilon>0$ and define for $t\ge 3$,
\[
u_t:=\frac{1+\varepsilon}{c}\log t, \qquad B_t:=|\mu|+u_t,
\]
and define the $B_1:=B_2:=B_3$ in a way so that $(B_t)_{t\ge1}$ is nondecreasing. Then, almost surely there must exist $T<\infty$ such that for all $t\ge T$ and all $1\le i\le t$, one has that $|X_i|\le B_t$. In addition, $B_t=o(\sqrt{t/\log\log t})$.
\end{lemma}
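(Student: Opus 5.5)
The plan is to copy the proof of Lemma~\ref{lem:envelope-subG}, replacing the Gaussian tail computation with the exponential one. First fix $\varepsilon>0$. For each integer $t\ge 3$, stationarity and the assumed tail bound give
\[
\Pbb(|X_t-\mu|>u_t)=\Pbb(|X_1-\mu|>u_t)\le K\exp\!\Big(-c\cdot\tfrac{1+\varepsilon}{c}\log t\Big)=K\,t^{-(1+\varepsilon)}.
\]
This is summable over $t\ge 3$. By the first Borel--Cantelli lemma, almost surely there is a random $T_0<\infty$ such that $|X_t-\mu|\le u_t$ for all $t\ge T_0$.

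Next I would upgrade this per-index statement to the uniform statement over all $i\le t$. Since $u_t$ is nondecreasing, every $T_0\le i\le t$ satisfies $|X_i-\mu|\le u_i\le u_t$. For the finitely many early indices, set $M_0:=\max_{1\le i\le T_0}|X_i-\mu|<\infty$. Because $u_t\to\infty$, there is a random $T_1$ with $u_t\ge M_0$ for all $t\ge T_1$. For $t\ge T:=\max\{T_0,T_1,3\}$ we then get $\max_{i\le t}|X_i-\mu|\le u_t$, and hence $|X_i|\le|\mu|+u_t=B_t$. The padding $B_1=B_2=B_3$ only concerns $t<3$, so it does not affect the eventual statement. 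It serves only to make $(B_t)$ nondecreasing and deterministic, as Theorem~\ref{thm:LIL-growing-envelope} requires.

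Finally, the rate condition is immediate:
\[
\frac{B_t}{\sqrt{t/\log\log t}}=\frac{|\mu|\sqrt{\log\log t}}{\sqrt t}+\frac{1+\varepsilon}{c}\cdot\frac{\log t\,\sqrt{\log\log t}}{\sqrt t}\to 0,
\]
since $\log t\sqrt{\log\log t}=o(\sqrt t)$.

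No step is a real obstacle. The only point needing care is the passage from ``$|X_t-\mu|\le u_t$ eventually'' to ``$\max_{i\le t}|X_i|\le B_t$ eventually''. That passage relies on the monotonicity of $u_t$ and the finiteness of $M_0$, exactly as in Lemma~\ref{lem:envelope-subG}.
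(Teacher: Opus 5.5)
Your proposal is correct and matches the paper's proof essentially step for step. Both arguments use the tail bound to get $K t^{-(1+\varepsilon)}$, then the first Borel--Cantelli lemma, then the monotone upgrade via $M_0$ and $T_1$ to a bound uniform over $i\le t$, and finally the same rate computation. Your explicit inclusion of $3$ in $T=\max\{T_0,T_1,3\}$ is a small added bit of care.
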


\begin{proof}
Before we start our proof, we will point out that the subexponential proof follows the same line very closely as that of the subgaussian. Meaning, to begin, for each integer $t\ge 3$, let's apply stationarity alongside our assume tail bound. Doing so gives us that,
\[
\Pbb(|X_t-\mu|>u_t)=\Pbb(|X_1-\mu|>u_t)\le Ke^{-cu_t}= Ke^{-(1+\varepsilon)\log t}=Kt^{-(1+\varepsilon)}.
\]
Now, the sum $\sum_{t\ge 3}Kt^{-(1+\varepsilon)}<\infty$. By the Borel Cantelli Lemma 1 we therefore once again have that $\Pbb(|X_t-\mu|>u_t)$ infinitely often is $0$. Hence almost surely there must exist a finite $T_0$ such that for all $t\ge T_0$, $|X_t-\mu|\le u_t$. Note that $(u_t)$ is nondecreasing. So, for any $t\ge T_0$ and any $T_0\le i\le t$, we have that $|X_i-\mu|\le u_i\le u_t$. As a consequence we get that $\max_{T_0\le i\le t}|X_i-\mu|\le u_t$. Now let's let $M_0:=\max_{1\le i\le T_0}|X_i-\mu|<\infty$. Again, we send $u_t\to\infty$. As such, there must exist $T_1$ such that $u_t\ge M_0$ for all $t\ge T_1$. So for all $t\ge \max\{T_0, T_1\}$ we have that $\max_{1\le i\le t}|X_i-\mu|\le u_t$. Hence for all $1\le i\le t$,
\(
|X_i|\le |\mu|+|X_i-\mu|\le |\mu|+u_t = B_t,
\)
which proves the envelope property. It remains to show that $B_t=o(\sqrt{t/\log\log t})$. It's easy to see that,
\[
\frac{B_t}{\sqrt{t/\log\log t}} =\frac{|\mu|}{\sqrt{t/\log\log t}}+\frac{1+\varepsilon}{c}\cdot \frac{\log t\cdot \sqrt{\log\log t}}{\sqrt{t}} \xrightarrow[t\to\infty]{}0,
\]
and so that concludes the proof as once again we showed both properties are satisfied, this time wrt subexponential random variables.
\end{proof}

\noindent Lastly, we present an almost sure envelope for any random variable with a finite $p$-th moment.

\begin{lemma}\label{lem:envelope-moment}
Suppose that $\Ebb[|X_1|^p]<\infty$ for some $p>2$. Take any $\gamma>1/p$ and define for $t\ge 3$,
\[
u_t:=t^{1/p}(\log t)^{\gamma}, \qquad
B_t:=\max_{3\le s\le t} u_s,
\]
and define $B_1:=B_2:=B_3$. It follows then that $(B_t)_{t\ge1}$ is deterministic, nondecreasing, and $B_t\to\infty$. In addition, we certainly have that there exists a $T<\infty$ such that for all $t\ge T$, and all $1\le i\le t$, one has that $|X_i|\le B_t$, and $B_t=o(\sqrt{t/\log\log t})$. 
\end{lemma}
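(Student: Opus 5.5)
Following the template of Lemmas~\ref{lem:envelope-subG} and~\ref{lem:envelope-subexp}, we first get a Borel--Cantelli bound for individual observations and then turn it into a bound on running maxima. The tail bound now comes from Markov's inequality on $|X_1|^p$ instead of an exponential tail.

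\textbf{Structural facts.} For $t\ge 3$, $u_t=t^{1/p}(\log t)^\gamma$ is positive and tends to infinity, since $\log 3>1$ and $\gamma>0$. So $B_t=\max_{3\le s\le t}u_s$ is deterministic and nondecreasing, with $B_t\ge u_t\to\infty$. In fact $u_t$ is itself eventually increasing, so $B_t=u_t$ for large $t$, though we only need $B_t\ge u_t$ together with an upper bound on $B_t$.

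\textbf{Borel--Cantelli step.} By stationarity and Markov's inequality,
\[
\Pbb(|X_t|>u_t)\le \frac{\Ebb|X_1|^p}{u_t^p}=\frac{\Ebb|X_1|^p}{t(\log t)^{p\gamma}}.
\]
Since $p\gamma>1$, the Bertrand series $\sum_t t^{-1}(\log t)^{-p\gamma}$ converges. Borel--Cantelli I then gives a random $T_0<\infty$ with $|X_t|\le u_t\le B_t$ for all $t\ge T_0$.

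\textbf{Running maximum.} For $T_0\le i\le t$ we have $|X_i|\le u_i\le B_i\le B_t$, using monotonicity of $B$; this is why $B_t$ is defined as a running max. The finitely many early terms are bounded by $M_0:=\max_{i\le T_0}|X_i|<\infty$. Since $B_t\to\infty$, there is $T_1$ with $B_t\ge M_0$ for $t\ge T_1$. Taking $T=\max\{T_0,T_1\}$ gives $|X_i|\le B_t$ for all $1\le i\le t$ and all $t\ge T$.

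\textbf{Growth rate.} Since $B_t=u_t$ eventually, it suffices to show $u_t=o(\sqrt{t/\log\log t})$. We have
\[
\frac{u_t}{\sqrt{t/\log\log t}}=t^{1/p-1/2}(\log t)^\gamma(\log\log t)^{1/2}\to 0,
\]
because $p>2$ makes the exponent $1/p-1/2$ strictly negative, and a negative power of $t$ beats any polylog factor. For a self-contained argument that avoids eventual monotonicity of $u_t$, bound $B_t\le \max_{3\le s\le t}s^{1/p}(\log s)^\gamma\le t^{1/p}(\log t)^\gamma$ directly, since both factors are nondecreasing in $s$ for $s\ge 3$.

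\textbf{Expected obstacle.} None of the steps is hard. The only place needing care is the summability in the Borel--Cantelli step, which needs $p\gamma>1$, i.e.\ the hypothesis $\gamma>1/p$ exactly. The condition $p>2$ is used only in the final growth comparison, which matches the tightness discussion at $p=2$.
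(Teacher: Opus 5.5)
Your proposal is correct and follows the paper's proof essentially step for step: Markov's inequality on $|X_1|^p$ with the convergent series $\sum_t t^{-1}(\log t)^{-p\gamma}$ (using $p\gamma>1$), Borel--Cantelli~I, the running-maximum upgrade via $M_0$ and $T_1$, and the comparison $t^{1/p-1/2}(\log t)^{\gamma}\sqrt{\log\log t}\to 0$ using $p>2$. Your observation that $u_s$ is increasing for $s\ge 3$, so that $B_t=u_t$ exactly, makes the final growth bound slightly more rigorous than the paper's informal ``up to the running maximum, $B_t\sim t^{1/p}(\log t)^{\gamma}$''.
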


\begin{proof}
Once again, for each integer $t\ge 3$, let's apply Markov's inequality and the same stationarity. Doing so gives us that,
\[
\Pbb(|X_t|>u_t)=\Pbb(|X_1|>u_t)\le \frac{\Ebb[|X_1|^p]}{u_t^p} =\frac{\Ebb[|X_1|^p]}{t(\log t)^{p\gamma}}.
\]
We know that $p\gamma>1$. As such, the series $\sum_{t\ge 3}\frac{1}{t(\log t)^{p\gamma}}$ converges. So, $\sum_{t=3}^\infty \Pbb(|X_t|>u_t)<\infty$. So the same Borel Cantelli Lemma 1 tells us that $\Pbb(|X_t|>u_t)$ infinitely often indeed must be zero. In other words, wp 1 there exists finite $T_0$ such that for all $t\ge T_0$, $|X_t|\le u_t$. By our construction $B_t\ge u_t$ and $B_t$ is nondecreasing. Hence it follows that for any $t\ge T_0$ and any $T_0\le i\le t$, $|X_i|\le u_i\le B_i\le B_t$. Hence $\max_{T_0\le i\le t}|X_i|\le B_t$. Let $M_0:=\max_{1\le i\le T_0}|X_i|<\infty$. We know that $B_t\to\infty$. As a consequence, there exists $T_1$ such that $B_t\ge M_0$ for all $t\ge T_1$. Hence for all $t\ge \max\{T_0, T_1\}$, we have that $\max_{1\le i\le t}|X_i|\le B_t$. This is the envelope property! It remains to now show that $B_t=o(\sqrt{t/\log\log t})$. Up to the running maximum, certainly it's the case that $B_t\sim t^{1/p}(\log t)^{\gamma}$ and $p>2$. Therefore,
\[
\frac{B_t}{\sqrt{t/\log\log t}}\le t^{1/p-1/2}(\log t)^{\gamma}\sqrt{\log\log t}\xrightarrow[t\to\infty]{}0.
\]
And so once again, we showed both properties hold, this case with finite $p$ moment random variables. In any case, this concludes the proof. 
\end{proof}
\section{What happens in the $p=2$ case and is the envelope tight?}\label{sec:env-tightness}
In this section, let us explain what would in fact happen if we only assume a weak second moment assumption (ie, $p=2$) on the tail. There are two points that we will detail here that are complementary to one another. First, suppose that a deterministic envelope $(B_t)_{t\ge 1}$ grows strictly faster than $\sqrt{t/\log\log t}$ and eventually contains the data almost surely. If this were to occur, then the time-varying projection cost $\KLinf^{(t)}(\widehat P_t, \mu)$ will be forced onto a scale that is in fact strictly smaller than $\log\log t$. Second, unfortunately if only $\Ebb[X_1^2]<\infty$, then we cannot verify in general the key assumption needed in Theorem~\ref{thm:LIL-growing-envelope}: meaning, we cannot verify the existence of an almost sure valid deterministic envelope with $B_t=o(\sqrt{t/\log\log t})$. In other words, there certainly exist finite variance distributions for which almost surely every envelope will fail. All of this is what we will formalize in this section. To begin, recall our time-$t$ constrained functional defined as,
\[
\KLinf^{(t)}(\nu,m) :=\inf\Big\{\KL(\nu\|Q): Q\in\mathcal P([-B_t,B_t]),\ \int x\,dQ(x)\ge m\Big\}.
\]
And, in the empirical specialization of this if you will where $\nu=\widehat P_t$, we let $\widehat\mu_t=\int x\,d\widehat P_t(x)$ and $\Delta_t:=(\mu-\widehat\mu_t)_+$. With this setup in mind, let us present our first lemma, which will give intuition for what happens when we ``sprinkle'' mass at the boundary.

\begin{lemma}\label{lem:sprinkling-upper}
Take a $B>0$ and let $\nu\in\mathcal P([-B, B])$ have mean $\bar\mu:=\int xd\nu(x)$. Now, for any $m\in(\bar\mu, B)$ define $\varepsilon:=\frac{m-\bar\mu}{B-\bar\mu}\in(0,1)$. In addition, define $Q:=(1-\varepsilon)\nu+\varepsilon\delta_{B}$. Then, it follows that for $Q\in\mathcal P([-B, B])$ we get that $\int xdQ(x)=m$ and $\KL(\nu\|Q)\le -\log(1-\varepsilon)$. And if in fact we have that $\nu(\{B\})=0$ then it follows that $\KL(\nu\|Q)=-\log(1-\varepsilon)$ exactly. Finally as $\varepsilon\downarrow 0$, $-\log(1-\varepsilon)=(1+o(1))\varepsilon$, and in particular if $\varepsilon\le\tfrac12$ then $-\log(1-\varepsilon)\le 2\varepsilon$. 
\end{lemma}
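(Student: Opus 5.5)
The plan is to verify the four claims of Lemma~\ref{lem:sprinkling-upper} in order, reusing the likelihood-ratio bound from the proof of Proposition~\ref{prop:KLinf-degenerate}. The claims are: membership $Q\in\mathcal P([-B,B])$ with mean $m$; the KL upper bound; equality when $\nu(\{B\})=0$; and the asymptotics of $-\log(1-\varepsilon)$.

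\emph{Membership and mean.} Since $\bar\mu\in[-B,B]$ and $m\in(\bar\mu,B)$, we have $0<m-\bar\mu<B-\bar\mu$, so $\varepsilon\in(0,1)$. Hence $Q$ is a convex combination of two probability measures on $[-B,B]$ and lies in $\mathcal P([-B,B])$. Its mean is $(1-\varepsilon)\bar\mu+\varepsilon B=\bar\mu+\varepsilon(B-\bar\mu)=m$ by the choice of $\varepsilon$.

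\emph{KL upper bound.} Since $Q\ge(1-\varepsilon)\nu$ setwise, we get $\nu\ll Q$. The same argument as in the proof of Proposition~\ref{prop:KLinf-degenerate} shows that $L:=d\nu/dQ\le 1/(1-\varepsilon)$ holds $\nu$-a.s.; that argument only used $Q\ge(1-\varepsilon)\nu$. Integrating $\log L$ against $\nu$ then gives $\KL(\nu\|Q)\le-\log(1-\varepsilon)$.

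\emph{Equality when $\nu(\{B\})=0$.} Here I would write down the density explicitly. Let $S=[-B,B]\setminus\{B\}$. On $S$ the point mass vanishes, so $Q|_S=(1-\varepsilon)\nu|_S$. Define $L=1/(1-\varepsilon)$ on $S$ and $L(B)=0$. Then for any measurable $A$,
\[
\int_A L\,dQ=\nu(A\cap S)=\nu(A),
\]
using $\nu(\{B\})=0$. So $L$ is a version of $d\nu/dQ$, and it equals $1/(1-\varepsilon)$ $\nu$-a.s. Therefore $\KL(\nu\|Q)=\int\log L\,d\nu=-\log(1-\varepsilon)$ exactly. This identification of the Radon--Nikodym derivative is the only step needing any care. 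The subtle point is that $\nu$ and $\delta_B$ are mutually singular under the hypothesis, and this is exactly what makes the bound tight; it is also why no such claim is made when $\nu$ has an atom at $B$.

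\emph{Asymptotics.} From the series $-\log(1-\varepsilon)=\sum_{k\ge1}\varepsilon^k/k$ we get $\varepsilon\le-\log(1-\varepsilon)\le\varepsilon+\varepsilon^2\sum_{k\ge0}\varepsilon^k=\varepsilon+\varepsilon^2/(1-\varepsilon)$. Dividing by $\varepsilon$ shows the ratio tends to $1$ as $\varepsilon\downarrow0$, i.e. $-\log(1-\varepsilon)=(1+o(1))\varepsilon$. For $\varepsilon\le\tfrac12$ the upper bound gives $-\log(1-\varepsilon)\le\varepsilon+2\varepsilon^2\le2\varepsilon$.
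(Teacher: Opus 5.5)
Your proposal is correct and follows the paper's proof essentially step for step. Both arguments use the convex-combination and mean computation, the bound $d\nu/dQ\le 1/(1-\varepsilon)$ $\nu$-a.s. derived only from $Q\ge(1-\varepsilon)\nu$ (the paper re-runs the $A_\eta$ argument where you cite Proposition~\ref{prop:KLinf-degenerate}), and the estimate $-\log(1-\varepsilon)\le\varepsilon/(1-\varepsilon)\le 2\varepsilon$; your series bound $\varepsilon+\varepsilon^2/(1-\varepsilon)$ equals the paper's $\int_0^\varepsilon(1-u)^{-1}\,du$ bound, and your explicit version of the Radon--Nikodym derivative in the equality case ($1/(1-\varepsilon)$ off $\{B\}$, $0$ at $B$) is a cleaner justification of the paper's informal ``$Q=(1-\varepsilon)\nu$ on the support of $\nu$.''
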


\begin{proof}
The first thing we need to do is show that $Q$ is supported on $[-B, B]$ and has mean $m$. By our own construction we know that $\nu$ is supported on $[-B, B]$ and $\delta_B$ is supported at $B\in[-B, B]$. As a consequence, their convex combination $Q$ must be also. Let's now compute the mean:
\begin{align*}
\int x\,dQ(x) &=\int x\,d\big((1-\varepsilon)\nu+\varepsilon\delta_B\big)(x) = (1-\varepsilon)\int x\,d\nu(x)+\varepsilon\int x\,d\delta_B(x)\\
&= (1-\varepsilon)\bar\mu+\varepsilon B = \bar\mu+\varepsilon(B-\bar\mu)\\
&= \bar\mu+(m-\bar\mu) = m.
\end{align*}
With that shown let us now proceed to show absolute continuity, namely $\nu\ll Q$ and also derive a Radon-Nikodym bound. It's easy to see that for any measurable set $A$ we have that $Q(A)= (1-\varepsilon)\nu(A)+\varepsilon\delta_B(A)\ge (1-\varepsilon)\nu(A)$. Clearly then if $Q(A)$ were $0$, that means that $(1-\varepsilon)\nu(A)\le Q(A)=0$. And so as such, we'd have that $\nu(A)=0$, thereby showing that $\nu\ll Q$ indeed. Now let $L:=\frac{d\nu}{dQ}$ be the Radon-Nikodym derivative. We will show that $L\le \frac{1}{1-\varepsilon}$ $\nu$-almost surely. To begin, simply take an $\eta>0$ and define the measurable $A_\eta:=\left\{L>\frac{1}{1-\varepsilon}+\eta\right\}$. Now using the fact that $\nu(A_\eta)=\int_{A_\eta}LdQ$ and the very definition of $A_\eta$ we have that,
\begin{align*}
\nu(A_\eta) = \int_{A_\eta} L\,dQ >\left(\frac{1}{1-\varepsilon}+\eta\right)Q(A_\eta)\ge\left(\frac{1}{1-\varepsilon}+\eta\right)(1-\varepsilon)\nu(A_\eta)\\
=\big(1+\eta(1-\varepsilon)\big)\nu(A_\eta).
\end{align*}
We know that of course the only way a nonnegative number can be strictly speaking larger than itself multiplied by $1+\eta(1-\varepsilon)>1$ is if it were $0$. So it must be the case that for every $\eta>0$, $\nu(A_\eta)=0$. We know that $\eta>0$ was taken arbitrary and hence we can see that $\frac{d\nu}{dQ}=L\le \frac{1}{1-\varepsilon}$ almost surely under $\nu$. The $\KL$ bound is now immediate. That is, we can use our bound on $L$ and just apply the definition. Namely, $\KL(\nu\|Q)= \int \log\left(\frac{d\nu}{dQ}\right)d\nu\le \int \log\left(\frac{1}{1-\varepsilon}\right)=-\log(1-\varepsilon)\int 1d\nu= -\log(1-\varepsilon)$. As for when we have equality, note that if $\nu(\{B\})=0$ then that would mean that $\delta_B$ would place mass outside of the $\nu$ support. As such on the support of $\nu$ we would have that $Q=(1-\varepsilon)\nu$. Therefore $\frac{d\nu}{dQ}=\frac{1}{1-\varepsilon}$ will hold under $\nu$ wp 1. Plugging this back into the KL definition would therefore give us that $\KL(\nu\|Q)=\int \log\left(\frac{1}{1-\varepsilon}\right)d\nu=-\log(1-\varepsilon)$. We know that as $\varepsilon\downarrow0$, $-\log(1-\varepsilon)\to 0$ and $\varepsilon\to 0$, so it follows that $\lim_{\varepsilon\downarrow 0}\frac{-\log(1-\varepsilon)}{\varepsilon}=1$. So clearly it follows that $-\log(1-\varepsilon)=(1+o(1))\varepsilon$ as $\varepsilon\downarrow0$. It remains to now show the bound we get when $\varepsilon\le \tfrac12$. Let us start by noting that for $0\le u\le \varepsilon$ we have that $1-u\ge 1-\varepsilon$ and thus $\frac{1}{1-u}\le \frac{1}{1-\varepsilon}$. As such,
\[
-\log(1-\varepsilon) =\int_{0}^{\varepsilon}\frac{1}{1-u}\,du \le \int_{0}^{\varepsilon}\frac{1}{1-\varepsilon}\,du =\frac{\varepsilon}{1-\varepsilon}\le 2\varepsilon,
\]
which completes our proof.
\end{proof}

\noindent We are now ready to formalize the idea that large envelopes will collapse the $\log\log t/t$ scale. We will do that with the following proposition.

\begin{proposition}\label{prop:large-envelope-collapse}
Suppose that $X_1, X_2, X_3, \dots$ are all iid with mean $\mu$ and variance $\sigma^2\in(0,\infty)$. Let $(B_t)_{t\ge 1}$ be deterministic and nondecreasing with $B_t\to\infty$ and assume that the envelope event holds. That is,
\[
\mathcal E_B :=\left\{\exists T_B<\infty:\ \forall t\ge T_B,\ \max_{1\le i\le t}|X_i|\le B_t\right\}.
\]
If in addition we have that $\frac{B_t}{\sqrt{t/\log\log t}}\xrightarrow[t\to\infty]{}\infty$, then on the probability-one event where both $\mathcal E_B$ and the classical mean LIL hold we in fact have that
\[
\limsup_{t\to\infty}\frac{t\KLinf^{(t)}(\widehat P_t,\mu)}{\log\log t}=0.
\]
\end{proposition}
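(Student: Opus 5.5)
The plan is to bound $\KLinf^{(t)}(\widehat P_t,\mu)$ from above by the sprinkling competitor of Lemma~\ref{lem:sprinkling-upper}, and then show that this bound is $o(\log\log t/t)$ along the whole sequence. Throughout, fix an outcome in the probability-one event where $\mathcal E_B$ and the classical mean LIL both hold. We restrict to $t\ge T_B$ large enough that $\widehat P_t\in\mathcal P([-B_t,B_t])$ and $B_t>|\mu|+1$; both hold eventually since $B_t\to\infty$.

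First, the times with $\widehat\mu_t\ge\mu$ are trivial. Then $\widehat P_t$ is itself feasible, so $\KLinf^{(t)}(\widehat P_t,\mu)=0$. For the remaining times we have $\Delta_t=\mu-\widehat\mu_t>0$. Apply Lemma~\ref{lem:sprinkling-upper} with $\nu=\widehat P_t$, $B=B_t$, $\bar\mu=\widehat\mu_t$ and $m=\mu$. This is legitimate because $\mu\in(\widehat\mu_t,B_t)$ for large $t$. Set
\[
\varepsilon_t=\frac{\Delta_t}{B_t-\widehat\mu_t}.
\]
The competitor $Q=(1-\varepsilon_t)\widehat P_t+\varepsilon_t\delta_{B_t}$ is supported on $[-B_t,B_t]$ and has mean exactly $\mu$, so it is admissible. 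Hence $\KLinf^{(t)}(\widehat P_t,\mu)\le-\log(1-\varepsilon_t)$. By the SLLN (implied by the LIL), $\widehat\mu_t\to\mu$, so $B_t-\widehat\mu_t\ge B_t/2$ eventually. Therefore $\varepsilon_t\le 2\Delta_t/B_t\to0$, and once $\varepsilon_t\le\tfrac12$ the lemma gives
\[
\KLinf^{(t)}(\widehat P_t,\mu)\le 2\varepsilon_t\le\frac{4\Delta_t}{B_t}.
\]

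Second, insert the mean LIL. On our event $\Delta_t\le|\widehat\mu_t-\mu|\le 2\sqrt{2\sigma^2\log\log t/t}$ for all large $t$. Hence
\[
\frac{t\,\KLinf^{(t)}(\widehat P_t,\mu)}{\log\log t}\le \frac{4t}{\log\log t}\cdot\frac{2\sqrt{2}\sigma\sqrt{\log\log t/t}}{B_t}=\frac{8\sqrt2\,\sigma}{B_t\sqrt{\log\log t/t}}=8\sqrt2\,\sigma\,\frac{\sqrt{t/\log\log t}}{B_t}.
\]
The right side tends to $0$ by the hypothesis $B_t/\sqrt{t/\log\log t}\to\infty$. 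The bound also covers the times where the cost is $0$, so it holds for all large $t$. Since the quantity is nonnegative, the $\limsup$ equals $0$.

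The main point to get right is that the bound is linear in $\Delta_t$, not quadratic. This is exactly why a large envelope kills the $\log\log t$ scale: $\Delta_t/B_t$ beats $\Delta_t^2$ once $B_t\Delta_t\to\infty$. Beyond that, one needs the uniform (not subsequential) LIL upper bound on $|\widehat\mu_t-\mu|$, and must check the feasibility conditions $\mu<B_t$ and $\varepsilon_t\le\tfrac12$ eventually. These are routine. Notably, no lower bound on $\widehat\sigma_t^2$ is needed.
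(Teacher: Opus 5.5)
Your proposal is correct and follows the paper's proof essentially step for step. Both arguments use the sprinkling competitor of Lemma~\ref{lem:sprinkling-upper} with $\varepsilon_t=\Delta_t/(B_t-\widehat\mu_t)\le 2\Delta_t/B_t$, then $-\log(1-\varepsilon_t)\le 2\varepsilon_t$, then the uniform (not subsequential) LIL bound $\Delta_t\lesssim\sqrt{\log\log t/t}$, which yields $t\,\KLinf^{(t)}(\widehat P_t,\mu)/\log\log t\lesssim \sqrt{t/\log\log t}/B_t\to 0$; the only difference is the LIL constant ($2\sqrt{2}\sigma$ in yours, $\sqrt{2\sigma^2}+1$ in the paper's), which is cosmetic.
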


\begin{proof}
We're going to begin our proof by first taking a particular outcome $\omega$ that belong to the intersection of both the probability-one events, ie the envelope event $\mathcal E_B$ and the classic mean LIL. So with a very minor abuse of notation, we will omit $\omega$ from our notation. The first thing we need to show is a lower bound on $B_t-\widehat\mu_t$ for large $t$. By the mean LIL, we know that $\widehat\mu_t\to\mu$ and therefore there must exist a $T_\mu<\infty$ and such that for all $t\ge T_\mu$, $|\widehat\mu_t|\le |\mu|+|\widehat\mu_t-\mu|\le |\mu|+1$. In addition, note that $B_t\to\infty$ and $(B_t)$ by definition is nondecreasing. So, there must also exist a $T_B'<\infty$ such that for all $t\ge T_B'$, $B_t\ge 2(|\mu|+1)$. So! That means that for all $t\ge T_0:=\max\{T_\mu, T_B'\}$, it follows that  $B_t-\widehat\mu_t\ge B_t-|\widehat\mu_t|\ge B_t-(|\mu|+1) \ge \frac{B_t}{2}$. Let us now derive a sprinkling bound for $\KLinf^{(t)}(\widehat P_t, \mu)$. Take a $t\ge T_0$ such that we also have $t\ge T_B$, so $\widehat P_t$ is clearly supported $[-B_t, B_t]$. Necessarily if $\Delta_t=0$, then $\widehat \mu_t \ge \mu$ and hence $\KLinf^{(t)}(\widehat P_t, \mu)=0$. So assume that $\Delta_t>0$, or that $\widehat\mu_t<\mu$. Again, since $B_t\to\infty$, there exists $T_1<\infty$ such that $\mu<B_t$ for all $t\ge T_1$. Now considering all of this, for a $t\ge \max\{T_0, T_B, T_1\}$, $\varepsilon_t:=\frac{\mu-\widehat\mu_t}{B_t-\widehat\mu_t}=\frac{\Delta_t}{B_t-\widehat\mu_t}\in(0,1)$. Using the fact that $B_t-\widehat\mu_t\ge B_t/2$, we get that,
\[
\varepsilon_t = \frac{\Delta_t}{B_t-\widehat\mu_t}\le \frac{\Delta_t}{B_t/2}=\frac{2\Delta_t}{B_t}.
\]
Let's now apply Lemma~\ref{lem:sprinkling-upper} with $B=B_t$, $\nu=\widehat P_t$ and $m=\mu$. Doing all this gives us that $\KLinf^{(t)}(\widehat P_t, \mu)\le -\log(1-\varepsilon_t)$. Now, from the mean LIL we know that $\limsup_{t\to\infty}\frac{|\widehat\mu_t-\mu|}{\sqrt{(2\sigma^2\log\log t)/t}}=1$. This means that we conclude that for a particular constant $c:=\sqrt{2\sigma^2}+1>\sqrt{2\sigma^2}$, there exists $T_{\mathrm{LIL}}<\infty$ such that for all $t\ge T_{\mathrm{LIL}}$, $|\widehat\mu_t - \mu|\le c\sqrt{\frac{\log\log t}{t}}$. Therefore, $\Delta_t\le c\sqrt{\frac{\log\log t}{t}}$ for all $t\ge T_{\mathrm{LIL}}$. Therefore, using the fact that $\varepsilon_t\le 2\Delta_t/B_t$ and the bound on $\Delta_t$, it's easy to see that $\varepsilon_t \le \frac{2c}{B_t}\sqrt{\frac{\log\log t}{t}}$. And, because $B_t/\sqrt{t/\log\log t}\to\infty$, we have that,
\[
\frac{1}{B_t}\sqrt{\frac{\log\log t}{t}} =\frac{1}{B_t}\cdot \frac{1}{\sqrt{t/\log\log t}} \;\xrightarrow[t\to\infty]{}\;0.
\]
So, $\varepsilon_t\to 0$. Thus there exists $T_\varepsilon<\infty$ where for all $T\ge T_\varepsilon$, $\varepsilon_t\le \tfrac12$ so Lemma~\ref{lem:sprinkling-upper} gives us that indeed $-\log(1-\varepsilon_t)\le 2\varepsilon_t$. In particular for all sufficiently large $t$ with $\Delta_t>0$ we have that $\KLinf^{(t)}(\widehat P_t, \mu)\le -\log(1-\varepsilon_t)\le 2\varepsilon_t \le 2\cdot \frac{2\Delta_t}{B_t}=\frac{4\Delta_t}{B_t}$. Let's now multiply it all by $t/\log\log t$ and use the fact that $\Delta_t\le c\sqrt{\frac{\log\log t}{t}}$. As such,
\begin{align*}
\frac{t}{\log\log t}\,\KLinf^{(t)}(\widehat P_t,\mu)
&\le \frac{t}{\log\log t}\cdot \frac{4\Delta_t}{B_t}\\
&\le \frac{t}{\log\log t}\cdot \frac{4c}{B_t}\sqrt{\frac{\log\log t}{t}}\\
&=\frac{4c}{B_t}\sqrt{\frac{t}{\log\log t}}.
\end{align*}
Now! We know that $B_t/\sqrt{t/\log\log t}\to\infty$. Therefore, the rhs above will converge to $0$. That indeed proves that,
\[
\limsup_{t\to\infty}\frac{t\,\KLinf^{(t)}(\widehat P_t,\mu)}{\log\log t}=0.
\]
And, hence, we are done.\end{proof}

\noindent With this now shown, it is interesting to note that even with a second moment, we can still get a deterministic almost sure envelope. We formalize that idea with the following lemma.

\begin{lemma}\label{lem:envelope-p2}
Suppose that $\Ebb[X_1^2]<\infty$. Let $(B_t)_{t\ge 1}$ be deterministic and nondecreasing, where $B_t\to\infty$. If in fact we have that $\sum_{t=1}^\infty \frac{1}{B_t^2}<\infty$, then almost surely there exists a $T<\infty$ such that for all $t\ge T$ and all $1\le i\le t$, $|X_i|\le B_t$. In particular we know that for any particular $\gamma>\tfrac12$ and for $t\ge 3$, the choice,
\[
u_t:=\sqrt{t}(\log t)^{\gamma}, \quad B_t:=\max_{3\le s\le t}u_s, \quad B_1:=B_2:=B_3,
\]
satisfies the summability condition and thus is an almost sure valid deterministic envelope.
\end{lemma}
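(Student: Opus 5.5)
The plan is to follow the same Borel--Cantelli template used in Lemmas~\ref{lem:envelope-subG}--\ref{lem:envelope-moment}, with Chebyshev/Markov at second order.

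\textbf{Step 1: a single tail event per time.} For each $t$, stationarity and Markov's inequality applied to $X_t^2$ give
\[
\Pbb(|X_t|>B_t)=\Pbb(|X_1|>B_t)\le \frac{\Ebb[X_1^2]}{B_t^2}.
\]
By the summability hypothesis $\sum_t 1/B_t^2<\infty$, we get $\sum_t \Pbb(|X_t|>B_t)<\infty$. The first Borel--Cantelli lemma then gives, almost surely, a finite $T_0$ such that $|X_t|\le B_t$ for all $t\ge T_0$.

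\textbf{Step 2: upgrade to the running maximum.} Since $B_t$ is nondecreasing, for $t\ge T_0$ and $T_0\le i\le t$ we have $|X_i|\le B_i\le B_t$. The finitely many early indices are controlled by $M_0:=\max_{i\le T_0}|X_i|<\infty$. Because $B_t\to\infty$, there is $T_1$ with $B_t\ge M_0$ for all $t\ge T_1$. Taking $T=\max\{T_0,T_1\}$, every $t\ge T$ satisfies $|X_i|\le B_t$ for all $1\le i\le t$.

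\textbf{Step 3: the concrete envelope.} For $u_t=\sqrt t(\log t)^\gamma$ with $\gamma>\tfrac12$, the running maximum $B_t=\max_{3\le s\le t}u_s$ is deterministic and nondecreasing by construction. It also tends to infinity, since $B_t\ge u_t\to\infty$. For summability, $B_t\ge u_t$ gives
\[
\frac{1}{B_t^2}\le \frac{1}{t(\log t)^{2\gamma}},
\]
and $\sum_t 1/\bigl(t(\log t)^{2\gamma}\bigr)$ converges because $2\gamma>1$ (integral test or Cauchy condensation). The first two terms, $t=1,2$, are finite constants, so the full series converges and Steps 1--2 apply.

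There is no real obstacle here. The only point needing care is in Step 2: Borel--Cantelli alone controls only the current observation $X_t$, not all $X_i$ with $i\le t$. Closing that gap needs both the monotonicity of $B_t$ and the finite random constant $M_0$.
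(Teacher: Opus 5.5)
Your proposal is correct and follows the paper's proof essentially step for step. It uses Markov's inequality on $X_1^2$ with the first Borel--Cantelli lemma, then the upgrade to all $i\le t$ via monotonicity of $B_t$ and the finite random constant $M_0$, then the integral-test check for $u_t=\sqrt{t}(\log t)^{\gamma}$. Your explicit treatment of the $t=1,2$ terms (where $u_t$ is undefined) is slightly more careful than the paper's ``for each $t$, $B_t\ge u_t$.''
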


\begin{proof}
Let us start our proof by defining the event $A_t:=\{|X_t|>B_t\}$ for each $t\ge 1$. We know that the sequence is iid by assumption. Hence, of course, $\Pbb(A_t)=\Pbb(|X_1|>B_t)$. By Markov's inequality applied to $X_1^2$ we get that,
\[
\Pbb(A_t)=\Pbb(|X_1|>B_t) = \Pbb(|X_1|^2>B_t^2) \le \frac{\Ebb[X_1^2]}{B_t^2}.
\]
Let us now sum over all $t$ and use the fact that $\sum 1/B_t^2<\infty$. Doing so gives us that,
\[
\sum_{t=1}^\infty \Pbb(A_t)\le \Ebb[X_1^2]\sum_{t=1}^\infty \frac{1}{B_t^2}<\infty.
\]
Therefore, by Borell Cantelli Lemma 1, we can see that $\Pbb(A_t \text{ i.o.})=0$, so wp 1 there exists a finite (possibly random) $T_0<\infty$ such that $|X_t|\le B_t$ for all $t\ge T_0$. At this point, we are now going to ``upgrade'' our envelope to a uniform in $i\le t$ envelope. To begin, consider a particular outcome on which $|X_t|\le B_t$ for all $t\ge T_0$. We know that $(B_t)$ is nondecreasing, we know that for any $t\ge T_0$ and any index $i$ with $T_0\le i\le t$, $|X_i|\le B_i \le B_t$. Now, for finitely many indices $1\le i\le T_0$ let us define $M_0$ as $\max_{1\le i\le T_0}|X_i|<\infty$. We know that $B_t\to\infty$ and so there must exist again a finite (and possibly random) $T_1$ such that $B_t\ge M_0$ for all $t\ge T_1$. Hence for all $t\ge T:=\max\{T_0, T_1\}$ and all $1\le i\le t$, we have that $|X_i|\le B_t$. It remains to now verify our choice of $B_t=\max_{3\le s\le t}u_s$. Note that for $\gamma>\tfrac12$ and $t\ge 3$ we have that $\frac{1}{u_t^2}=\frac{1}{t(\log t)^{2\gamma}}$. By the integral test, we know that the series $\sum_{t\ge 3}\frac{1}{t(\log t)^{2\gamma}}$ converges if and only if $\int_{3}^{\infty}\frac{1}{x(\log x)^{2\gamma}}dx<\infty$. Let $u=\log x$ (and so as such $du=dx/x$). Then,
\[
\int_{3}^{\infty}\frac{1}{x(\log x)^{2\gamma}}\,dx =\int_{\log 3}^{\infty}\frac{1}{u^{2\gamma}}\,du = \left[\frac{u^{1-2\gamma}}{1-2\gamma}\right]_{\log 3}^{\infty}.
\]
This will be finite exactly in the case when $2\gamma>1$, or when $\gamma>\tfrac12$. Now for each $t$, because $B_t\ge u_t$ it follows that we have $\frac{1}{B_t^2}\le \frac{1}{u_t^2}$. Hence, $\sum_{t=1}^\infty \frac{1}{B_t^2}<\infty$, and that concludes our proof at last.\end{proof}

\noindent The importance of Lemma~\ref{lem:envelope-p2} really is in its power on the next corollary. Meaning, the $p=2$ envelope will force the normalized $\limsup$ to be $0$.

\begin{corollary}\label{cor:p2-envelope-limsup0}
Suppose that $\sigma^2\in(0,\infty)$ and $\Ebb[X_1^2]<\infty$. Let $\gamma>\tfrac12$ and let's take $(B_t)$ exactly as in Lemma~\ref{lem:envelope-p2}. Then on the probability one event where both the envelope and mean LIL hold we have that
\[
\limsup_{t\to\infty}\frac{t\,\KLinf^{(t)}(\widehat P_t,\mu)}{\log\log t}=0.
\]
\end{corollary}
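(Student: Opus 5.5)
The corollary follows by combining Lemma~\ref{lem:envelope-p2} with Proposition~\ref{prop:large-envelope-collapse}. The plan is to check the three hypotheses of the proposition for the specific envelope $B_t=\max_{3\le s\le t}\sqrt{s}(\log s)^{\gamma}$ with $\gamma>\tfrac12$. These hypotheses are: $(B_t)$ is deterministic, nondecreasing and tends to infinity; the envelope event $\mathcal E_B$ holds almost surely; and $B_t/\sqrt{t/\log\log t}\to\infty$. Once they are verified, the conclusion transfers verbatim on the intersection of $\mathcal E_B$ with the mean-LIL event. Both events have probability one, by Lemma~\ref{lem:envelope-p2} and Hartman--Wintner respectively, the latter applying since $\sigma^2\in(0,\infty)$.

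The first two hypotheses are immediate. Being deterministic and nondecreasing is built into the running-maximum definition, and $B_t\ge u_t=\sqrt t(\log t)^\gamma\to\infty$. The almost-sure envelope property is exactly the content of Lemma~\ref{lem:envelope-p2}: it uses only $\Ebb[X_1^2]<\infty$ together with summability of $1/B_t^2$, and the lemma already verified that summability for $\gamma>\tfrac12$.

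The only genuine check is the growth condition. Since $B_t\ge u_t$ for $t\ge3$,
\[
\frac{B_t}{\sqrt{t/\log\log t}}\ \ge\ \frac{\sqrt t(\log t)^{\gamma}}{\sqrt{t/\log\log t}}=(\log t)^{\gamma}\sqrt{\log\log t}\ \xrightarrow[t\to\infty]{}\ \infty,
\]
and this holds for any $\gamma>0$, so certainly for $\gamma>\tfrac12$. Note that a lower bound on $B_t$ suffices here; the running maximum never hurts this direction. The proposition then gives $\limsup_{t\to\infty} t\,\KLinf^{(t)}(\widehat P_t,\mu)/\log\log t=0$. The limsup is over nonnegative quantities, so it equals zero rather than merely being bounded above by it.

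I do not expect a real obstacle here. The only points of care are bookkeeping ones. First, the proposition's finiteness conditions, namely feasibility $\mu<B_t$ and $\widehat P_t$ being supported in $[-B_t,B_t]$, are already handled inside its proof for large $t$. Second, the two probability-one events must be intersected, rather than working with a single event, when stating the conclusion.
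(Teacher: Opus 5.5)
Your proposal is correct and matches the paper's proof essentially step for step. Lemma~\ref{lem:envelope-p2} gives the almost-sure envelope, and $B_t\ge u_t$ gives $B_t/\sqrt{t/\log\log t}\ge(\log t)^{\gamma}\sqrt{\log\log t}\to\infty$, so Proposition~\ref{prop:large-envelope-collapse} applies on the intersection of the envelope and mean-LIL events.
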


\begin{proof}
This is actually a very easy proof to see. Because, by Lemma~\ref{lem:envelope-p2} we know that the envelope event $\mathcal E_B$ will hold almost surely for this particular choice of $(B_t)$. In addition, for $t\ge 3$ we have that $B_t\ge u_t=\sqrt{t}(\log t)^\gamma$. As a consequence we get that,
\[
\frac{B_t}{\sqrt{t/\log\log t}} \ge \frac{\sqrt{t}(\log t)^\gamma}{\sqrt{t/\log\log t}}=(\log t)^\gamma\sqrt{\log\log t}\xrightarrow[t\to\infty]{}\infty.
\]
Hence, clearly the needed conditions are satisfied from Proposition~\ref{prop:large-envelope-collapse}, and so our corollary's conclusion indeed holds and we are done.
\end{proof}

\noindent One last point we will formalize here is that finite variance does not imply an envelope at the LIL scale, and hence that kills the fact that all $p=2$ laws could hold under Theorem~\ref{thm:LIL-growing-envelope}, which is exactly why we consider only $p>2$ in that theorem.

\begin{proposition}\label{prop:p2-no-small-envelope}
There indeed exists a mean-zero distribution with finite variance such that for the deterministic sequence $b_t:=\sqrt{\frac{t}{\log\log t}}$ (where $t\ge 3$), one has that,
\(
\Pbb\big(|X_t|>b_t\ \text{i.o.}\big)=1.
\)
Consequently for any deterministic nondecreasing sequence $(B_t)$ with $B_t=o\left(\sqrt{t/\log\log t}\right)$ we also have that,
\[
\Pbb\big(|X_t|>B_t\ \text{i.o.}\big)=1.
\]
And so, unfortunately, the envelope event $\mathcal E_B$ from Theorem~\ref{thm:LIL-growing-envelope} will fail for this law.
\end{proposition}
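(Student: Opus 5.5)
We will construct an explicit symmetric law and invoke the second Borel--Cantelli lemma. Take a symmetric distribution with tail
\[
\Pbb(|X_1|>x)=\frac{c}{x^2(\log x)\,(\log\log x)^{2}}\qquad\text{for } x\ge x_0,
\]
with $x_0$ large (say $x_0=e^e$), mass on $[-x_0,x_0]$ chosen to make this a probability law, and symmetry giving mean zero. We will check finiteness of the variance through $\Ebb[X_1^2]=\int_0^\infty 2x\,\Pbb(|X_1|>x)\,dx$. The tail part is $\int_{x_0}^\infty \frac{2c\,dx}{x(\log x)(\log\log x)^2}$. After the substitution $v=\log\log x$ it becomes $\int 2c\,v^{-2}\,dv<\infty$. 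Hence $\sigma^2\in(0,\infty)$.

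Next we lower bound $\Pbb(|X_t|>b_t)$ with $b_t=\sqrt{t/\log\log t}$. We have $b_t^2=t/\log\log t$, $\log b_t\sim\tfrac12\log t$, and $\log\log b_t\sim\log\log t$. Plugging in gives
\[
\Pbb(|X_t|>b_t)\asymp \frac{\log\log t}{t}\cdot\frac{1}{\log t\,(\log\log t)^2}=\frac{1}{t\,\log t\,\log\log t}.
\]
This series diverges: by the integral test, with $v=\log\log t$, it reduces to $\int dv/v=\infty$.

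Since the $X_t$ are independent, the events $\{|X_t|>b_t\}$ are independent. The second Borel--Cantelli lemma then yields $\Pbb(|X_t|>b_t\text{ i.o.})=1$.

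For the consequence, let $B_t=o(b_t)$. Then $B_t\le b_t$ for all large $t$, so $\{|X_t|>b_t\}\subseteq\{|X_t|>B_t\}$ eventually, and the i.o.\ event for $B_t$ has probability one as well. On this event, for infinitely many $t$ we have $\max_{i\le t}|X_i|\ge|X_t|>B_t$, so $\mathcal E_B$ fails almost surely.

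The main obstacle is calibrating the tail so that two requirements hold at once. The variance integral must converge, which requires the extra $(\log\log x)^{-2}$ factor. The Borel--Cantelli series at scale $b_t$ must still diverge. This works only because the $\log\log t$ in $b_t^2$ cancels one power of $\log\log$. The asymptotic comparisons in the step that lower bounds $\Pbb(|X_t|>b_t)$ should be done with explicit constants, for example $\log b_t\le\log t$ and $\log\log b_t\le\log\log t$ for large $t$. These give the clean bound $\Pbb(|X_t|>b_t)\ge c/(t\log t\log\log t)$.
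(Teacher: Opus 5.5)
Your proposal is correct and follows essentially the same route as the paper. It uses the same ingredients: a symmetric law with tail $1/(x^2\log x\,(\log\log x)^2)$ beyond $e^e$, finite variance via the double-logarithm substitution in the tail integral, the lower bound $\Pbb(|X_t|>b_t)\ge 1/(t\log t\log\log t)$ obtained from $\log b_t\le\log t$ and $\log\log b_t\le\log\log t$, the second Borel--Cantelli lemma, and the comparison $B_t\le b_t$ eventually. Your explicit remark that $\max_{i\le t}|X_i|\ge|X_t|>B_t$ for infinitely many $t$ is a small, useful addition that the paper leaves implicit when it concludes that $\mathcal E_B$ fails.
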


\begin{proof}
The very first thing we must do in our proof is build a finite-variance distribution. To that end, let $S$ be a Rademacher random variable, ie that $\Pbb(S=\pm 1)=\tfrac12$. Given this, we will define a nonnegative random variable $Y$ by specifying very clearly its survival function for $y\ge 0$. That is,
\[
\Pbb(Y>y) = 
\begin{cases}
A, & 0\le y<e^e,\\[3pt]\displaystyle
\frac{1}{y^2\log y\,(\log\log y)^2}, &y\ge e^e.
\end{cases}
\]
Note that we choose $A$ of course so that the survival function is continuous at $y=e$. That is,
\[
A:=\frac{1}{(e^e)^2\log(e^e)\,(\log\log(e^e))^2}=\frac{1}{e^{2e}\cdot e\cdot 1^2} = e^{-(2e+1)}\in(0,1).
\]
Clearly, this function is nonincreasing, right-continuous, and tends to $0$ as $y\to\infty$. As a consequence, it certainly defines a valid law on $[0,\infty)$, with an atom at $0$ of size $1-A$. Now, let $Y$ be independent and define the random variable $X:=SY$. Clearly, $X$ is symmetric about $0$. Furthermore, we know that by Cauchy-Schwarz, $\Ebb[|X|]\le \sqrt{\Ebb[X^2]}<\infty$, so certainly the mean exists and by symmetry we get that $\Ebb[X]=0$. With all this being said, we are now going to verify that indeed $\Ebb[X^2]<\infty$. We know that $|X|=Y$, so it must follow that $\Ebb[X^2]=\Ebb[Y^2]$. Note that for a nonnegative random variable by the basic property of the tail integral we have that,
\[
\Ebb[Y^2]=\int_{0}^{\infty}\Pbb(Y^2>s)ds=\int_{0}^{\infty}\Pbb(Y>\sqrt{s})ds.
\]
Let's now make the substitution of $s=y^2$, so clearly $ds=2y\,dy$. As such we get,
\[
\Ebb[Y^2]=2\int_{0}^{\infty}y\Pbb(Y>y)dy.
\]
Let's now compute what this is. Meaning,
\begin{align*}
\Ebb[Y^2] &=2\int_{0}^{e^e} y\,\Pbb(Y>y)\,dy +2\int_{e^e}^{\infty} y\,\Pbb(Y>y)\,dy\\
&=2\int_{0}^{e^e} y\cdot A\,dy +2\int_{e^e}^{\infty} y\cdot \frac{1}{y^2\log y(\log\log y)^2}\,dy\\
&=2A\left[\frac{y^2}{2}\right]_{0}^{e^e} +2\int_{e^e}^{\infty}\frac{1}{y\log y(\log\log y)^2}\,dy\\
&=A(e^e)^2 +2\int_{e^e}^{\infty}\frac{1}{y\log y(\log\log y)^2}\,dy.
\end{align*}

\noindent We now need to evaluate what the remaining integral is. To that end, let us make the substitution $u=\log y$, which means that $du=dy/y$. If we do this we get that,
\[
\int_{e^e}^{\infty}\frac{1}{y\log y(\log\log y)^2}\,dy =\int_{u=e}^{\infty}\frac{1}{u(\log u)^2}\,du.
\]
Let us now substitute again here, this time with $v=\log u$, so therefore $dv=du/u$. Doing this gives us,
\[
\int_{e}^{\infty}\frac{1}{u(\log u)^2}\,du = \int_{v=1}^{\infty}\frac{1}{v^2}\,dv=\left[-\frac{1}{v}\right]_{1}^{\infty} = 1.
\]
So it follows that $\Ebb[Y^2]=A(e^e)^2 + 2<\infty$, so certainly it must be the case that $E[X^2]<\infty$. With this shown, we know need to show that $\Pbb(|X_t|> b_t \text{ i.o.})=1$. To begin, let $X_1, X_2, X_3,\dots$ be iid copies of $X$ and define $E_t:=\{|X_t|>b_t\}$. Of course, $b_t\to\infty$ as $t/\log\log t\to\infty$. As such, there exists $T_0<\infty$ such that $b_t\ge e^e$ for all $t\ge T_0$. For such $t$ we may use the tail formula that,
\[
\Pbb(E_t)=\Pbb(|X_1|>b_t)=\frac{1}{b_t^2\log b_t(\log\log b_t)^2}.
\]
Additionally, because of the fact that $\log\log t\to\infty$, there exists $T_1<\infty$, such that $\log\log t\ge 1$ for all $t\ge T_1$ and hence for all $t\ge T_1$, $b_t^2=\frac{t}{\log\log t}\le t$. As such, $b_t\le \sqrt{t}\le t$. Define $T_2:=\max\{T_0, T_1\}$. Then for all $t\ge T_2$, we have $\log b_t\le \log t$ and $\log\log b_t\le \log\log t$. Hence we get,
\begin{align*}
\Pbb(E_t)
&=\frac{1}{b_t^2\log b_t(\log\log b_t)^2}\\
&=\left(\frac{\log\log t}{t}\right)\cdot \frac{1}{\log b_t}\cdot \frac{1}{(\log\log b_t)^2}\\
&\ge \left(\frac{\log\log t}{t}\right)\cdot \frac{1}{\log t}\cdot \frac{1}{(\log\log t)^2}\\
&= \frac{1}{t\log t\log\log t}.
\end{align*}

\noindent We know that the series $\sum_{t\ge 3}\frac{1}{t\log t\log\log t}$ diverges. This is obvious by the integral test. We proceed in a manner with two substitutions $u=\log x$ and then $v=\log u$. Meaning,
\[
\int_{3}^{\infty}\frac{1}{x\log x\log\log x}\,dx = \int_{\log 3}^{\infty}\frac{1}{u\log u}\,du = \int_{\log\log 3}^{\infty}\frac{1}{v}\,dv = \infty.
\]
As such, $\sum_{t\ge T_2}\Pbb(E_t)=\infty$. We know that the events $E_t$ are independent. So by Borel Cantelli Lemma 2 we have that $\Pbb(E_t\text{ i.o.})=1$. Extending from $b_t$ to any $B_t=o(b_t)$ is easy. We proceed as follows. Let $(B_t)$ be determinstic and nondecreasing as usual, with $B_t=o(b_t)$. Clearly then we have that $B_t/b_t\to 0$, and so there exists $T<\infty$ such that $B_t\le b_t$ for all $t\ge T$. As a consequence, it holds that for all $t\ge T$, $\{|X_t|>b_t\}\subseteq \{|X_t|>B_t\}$. Importantly, if $|X_t|>b_t$ happens infinitely often, then that must mean that $|X_t|>B_t$ does also. As such, since $\Pbb(|X_t|>b_t \text{ i.o.})=1$, it $\Pbb(|X_t|>B_t \text{ i.o.})=1$. So that proves the claim and hence we are finally done.
\end{proof}

\noindent All in all crucially, we have tightness at the scale $\sqrt{t/\log\log t}$. When $\widehat\mu_t<\mu<B_t$, for the empirical instance $\nu=\widehat P_t$ and target $m=\mu$, the sprinkling construction of Lemma~\ref{lem:sprinkling-upper} gives us that $\KLinf^{(t)}(\widehat P_t, \mu)\lesssim \frac{\Delta_t}{B_t}$. However, the local quadratic regime we analyzed in Theorem~\ref{thm:LIL-growing-envelope} gives that $\KLinf^{(t)}(\widehat P_t,\mu)\asymp \Delta_t^2$. We know that $\Delta_t$ will fluctuate at the LIL scale of $\Delta_t\asymp\sqrt{(\log\log t)/t}$. As such, the boundary where $\Delta_t^2$ and $\Delta_t/B_t$ match exactly can be easily computed as follows. That is, $\Delta_t^2\ \asymp\ \frac{\Delta_t}{B_t}$ implies that $B_t\asymp\frac{1}{\Delta_t}\asymp \sqrt{\frac{t}{\log\log t}}$. What is the takeaway? Clearly, this means that all the envelopes that are strictly smaller than $\sqrt{t/\log\log t}$ are going to keep the problem in the quadratic regime. And, they will also give us the sharp constant $1$ as in Theorem~\ref{thm:LIL-growing-envelope}. However, envelopes that are strictly larger than $\sqrt{t/\log\log t}$ will allow sprinkling to dominate. This will collapse the $\log\log t/t$ normalization to $0$ by Proposition~\ref{prop:large-envelope-collapse}.

\end{document}